\newtheorem{thm}{Theorem}[section]
\newtheorem{lem}[thm]{Lemma}
\newtheorem{prop}[thm]{Proposition}
\theoremstyle{definition}
\theoremstyle{remark}
\numberwithin{equation}{section}
\begin{document}

\title[Unbounded composition operators]
{Unbounded composition operators on Orlicz spaces}

\author{\sc\bf M. Namdar Baboli and Y. Estaremi  }
\address{\sc M. Namdar Baboli and Y. Estaremi  }
\email{math.mostafa@yahoo.com}
\email{y.estaremi@gu.ac.ir}

\address{Department of Mathematics, Faculty of Sciences, Golestan University, Gorgan, Iran.}

\thanks{}

\thanks{}

\subjclass[2020]{47B25, 47B38}

\keywords{Composition operators, unbounded operators, densely defined operators. }

\date{}

\dedicatory{}

\commby{}

%%% ----------------------------------------------------------------------
\begin{abstract}
In this paper we deal with unbounded composition operators defined in Orlicz spaces. Indeed, we provide some necessary and sufficient condition for densely definedness of composition operators on Orlicz spaces. Also, we will investigate the adjoint of densely defined composition operators and we give some equivalent conditions for it to be densely defined. In addition, we show that densely defined composition operator $C_{\varphi}$ is continuous if and only if it is everywhere defined. Finally, we characterize densely defined continuous composition operators.
\end{abstract}

\maketitle

\section{ \sc\bf Introduction }

Composition operators form a simple but interesting class of operators having interactions with different branches of mathematics and mathematical physics. In mathematics, composition operators commonly occur in the study of shift operators, for example, in the Beurling–Lax theorem and the Wold decomposition. They have been utilised in the dynamical systems to study different types of motions. The ergodic theory and topological dynamics make use of the composition operators in development of their theories.
In this paper we deal with unbounded composition operators defined in Orlicz spaces.\\

The theory of unbounded operators developed in the late 1920s and early 1930s as part of developing a rigorous mathematical framework for quantum mechanics. The theory's development is due to John von Neumann \cite{von} and Marshall Stone \cite{ston}. Von Neumann introduced using graphs to analyze unbounded operators in 1936, \cite{von1}. Subsequently, many mathematicians studied unbounded operators on Banach spaces, especially on function spaces. One can see some recent achievements for unbounded operators in
\cite{bmt, bud, dosi, kadi}. Unbounded composition operators on some function spaces were studied by many authors. For example in \cite{bj} fundamental properties of unbounded composition operators in $L^2$-spaces are studied. Also, in \cite{bjb} and \cite{bud} subnormality of unbounded composition operators are investigated. Moreover, unbounded composition operators on $H^2(B_2)$ are studied in \cite{cw}. In the present paper our goal is to study unbounded composition operators on Orlicz spaces. At first we investigate densely defined composition operators on $L^p$-spaces, that is a special case of Orlicz spaces. In the sequel we provide some necessary and sufficient condition for densely definedness of composition operators on Orlicz spaces. Moreover, we obtain the adjoint operator of composition operator on Orlicz spaces and then we find some conditions under which the adjoint operator is densely defined. Finally, we characterize densely defined continuous composition operators.

\section{ \sc\bf Preliminaries  and basic lemmas }
In this section, we recall the definition of some essential concepts in Orlicz spaces for later use and we refer the interested readers to \cite{kr,rao} for more details.

A no-negative function $\Phi:\mathbb{R}\rightarrow [0,\infty]$ is called a \textit{Young's function}  if it is convex, even ($\Phi(-x)=\Phi(x)$), $\Phi(0)=0$ and $\lim_{x\rightarrow \infty} \Phi(x)=+\infty$. With each \textit{Young's function} $\Phi$, there is another convex function $\Psi:\mathbb{R} \rightarrow [0, \infty]$ having similar properties defined by

$$\Psi(y)=\sup\{x\mid y\mid-\Phi(x): x\geq0\}, \ \ \ \ \ y\in \mathbb{R}.$$

The convex function $\Psi$ is called the \textit{complementary function} to $\Phi$ that it is also a \textit{Young's function}. By definition the pair $(\Phi, \Psi)$ satisfies \textit{Young's inequality}:

$$xy\leq \Phi(x)+\Phi(y), \ \ \ \ \ \ \ \ x,y\in \mathbb{R}.$$

The generalized inverse of $\Phi$ is defined by
$$
\Phi^{-1}(y)=\inf \{ x\geq 0: \Phi(x)> y\} \quad (y\in [0,\infty)).
$$
So by definition we have for all $x\geq0$, $\Phi\big(\Phi^{-1}(x)\big)\leq x$,
and if $\Phi(x)<\infty$, we also have $x\leq\Phi^{-1}\big(\Phi(x)\big)$. The inequalities will turn into equalities when $\Phi$ is a \textit{Young's function} vanishing only at zero and taking only finite values.\\

An especially useful \textit{nice Young's function} $\Phi$, termed an $N$-\textit{function}, is a continuous \textit{Young function} such that $\Phi(x)=0$ if and only if $x=0$ and $\lim_{x\rightarrow 0}\frac{\Phi(x)}{x}=0$, $\lim_{x\rightarrow \infty}\frac{\Phi(x)}{x}=+\infty$, while $\Phi(\mathbb{R})\subset \mathbb{R}^{+}$. Moreover, a function complementary to an $N$-function is again an $N$-function.\\

Let $\Phi$ be a \textit{Young's function}. Then we say $\Phi$ satisfies the
$\Delta_{2}$-condition, if $\Phi(2x)\leq
K\Phi(x) \; ( x\geq x_{0})$  for some constants
$K>0$ and $x_0>0$. Also, it is said to satisfy the
$\Delta'$-condition (respectively, the $\nabla'$-condition), if there exist  $ d>0$
(respectively, $b>0$) and $x_0>0$ such that
$$
\Phi(xy)\leq d\,\Phi(x)\Phi(y) \quad (x,y\geq x_{0})
$$
$$
(\mbox{respectively, }  \Phi(bxy)\geq \Phi(x)\Phi(y) \quad ( x,y\geq x_{0})).
$$
If $x_{0}=0$, these conditions are said to hold
globally. Notice that if $\Phi\in \Delta'$, then  $\Phi\in
\Delta_{2}$.

Let $(X, \Sigma, \mu)$ be a complete $\sigma$-finite measure space and $L^0(\Sigma)$  be the linear space of  equivalence classes of $\Sigma$-measurable real-valued functions on $X$. For $f\in L^0(\Sigma)$ the set $S(f)$ defined by $S(f):=\{x\in X : f(x)\neq
0\}$ is called the support of $f$. For every \textit{Young's function}  $\Phi$, the linear space
$$
L^{\Phi}(\mu)=\left\{f\in L^0(\Sigma):\exists k>0,
\int_X\Phi(kf)d\mu<\infty\right\}
$$
is called an Orlicz space. The functional $N_{\Phi}(.)$ defined by

$$N_{\Phi}(f)=\inf \{k>0:\int_{X}\Phi(\frac{f}{k})d\mu\leq 1\},$$

is a norm on $L^{\Phi}(\mu)$ and is called \textit{guage norm}(or Luxemburge norm). Also, $(L^{\Phi}(\mu), N_{\Phi}(.))$ is a Banach space, the basic measure space $(X,\Sigma,\mu)$ is unrestricted. There is another norm on $L^{\Phi}(\mu)$, defined as follow:

$$\|f\|_{\Phi}=\sup\{\int_{X}\mid fg\mid d\mu: g\in B_{\Psi}\}=\sup\{\mid\int_{X}fg d\mu\mid: g\in B_{\Psi}\},$$
in which $B_{\Psi}=\{g\in L^{\Psi}(\mu): \int_{X}\Psi(\mid g\mid )d\mu\leq 1\}$.
  The norm $\|.\|_{\Phi}$ is called \textit{Orlicz norm} and for any $f\in L^{\Phi}(\mu)$, the inequality
  $$N_{\Phi}(f)\leq \|f\|_{\Phi}\leq2N_{\Phi}(f),$$
holds.
For a \textit{Young's function} $\Phi$, let $\rho_{\Phi}:L^{\Phi}(\mu)\rightarrow \mathbb{R}^{+}$ such that $\rho_{\Phi}(f)=\int_{X}\Phi(f)d\mu$ for all $f\in L^{\Phi}(\mu)$.
Here we recall some facts on convergence of sequences in Orlicz spaces.

\begin{thm}\cite{rao}\label{t23}
Let $\{f_n\}_{n\geq1}$ be a sequence from $L^{\Phi}(\mu)$ and $f\in L^{\Phi}(\mu)$. Then the following assertions hold:\\

(a) If $\|f_n-f\|_{\Phi}\rightarrow 0$ (or equivalently $N_{\Phi}(f_n-f)\rightarrow 0$), then $\rho_{\Phi}(f_n)\rightarrow \rho_{\Phi}(f)$. The converse holds if $\Phi$ is $\triangle_2$-regular.\\

(b) If $\Phi$ is $\triangle_2$-regular Young function, or if $\Phi$ is continuous and concave $\Phi(0)=0$, $\Phi\nearrow$ as well, $\rho_{\Phi}(f_n)\rightarrow \rho_{\Phi}(f)$ as $n\rightarrow \infty$ and $f_n\rightarrow f$ \ \ a.e., or in $\mu$-measure, then $f_n\rightarrow f$ in norm.
\end{thm}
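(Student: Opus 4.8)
The plan is to handle both parts through the interplay between the modular $\rho_{\Phi}$ and the gauge norm $N_{\Phi}$, exploiting convexity of $\Phi$, Fatou's lemma, and the $\Delta_2$-condition to pass between modular and norm convergence. For the forward implication in (a), I would first record the elementary convexity estimate that $\rho_{\Phi}(g)\leq N_{\Phi}(g)$ whenever $N_{\Phi}(g)\leq 1$: if $N_{\Phi}(g)=\alpha\leq 1$ then $\rho_{\Phi}(g/\alpha)\leq 1$ and $\Phi(g)=\Phi(\alpha\cdot g/\alpha)\leq\alpha\,\Phi(g/\alpha)$, so integrating gives the bound. Applying this to $g=f_n-f$ shows that $N_{\Phi}(f_n-f)\to 0$ forces $\rho_{\Phi}(f_n-f)\to 0$, hence $f_n\to f$ in $\mu$-measure, so every subsequence has a further subsequence converging a.e.; a Fatou argument along such a subsequence yields $\rho_{\Phi}(f)\leq\liminf_n\rho_{\Phi}(f_n)$. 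For the matching upper bound I would write, for $\lambda\in(0,1)$,
\[
f_n=(1-\lambda)\frac{f}{1-\lambda}+\lambda\frac{f_n-f}{\lambda},
\]
and use convexity of $\Phi$ to obtain
\[
\rho_{\Phi}(f_n)\leq(1-\lambda)\,\rho_{\Phi}\!\left(\frac{f}{1-\lambda}\right)+\lambda\,\rho_{\Phi}\!\left(\frac{f_n-f}{\lambda}\right);
\]
since $N_{\Phi}((f_n-f)/\lambda)\to 0$ for each fixed $\lambda$, the last term vanishes, and letting $\lambda\downarrow 0$ (with $\Delta_2$-regularity guaranteeing $\rho_{\Phi}(f/(1-\lambda))\to\rho_{\Phi}(f)$ by dominated convergence) gives $\limsup_n\rho_{\Phi}(f_n)\leq\rho_{\Phi}(f)$. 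The converse assertion in (a), understood with an accompanying a.e.\ (or in-measure) convergence of $f_n$ to $f$, then reduces to part (b).

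The heart of the proof is (b), which I would isolate in a single Riesz-type inequality. Assuming $\rho_{\Phi}(f)<\infty$, set
\[
G_n=\tfrac12\Phi(f_n)+\tfrac12\Phi(f)-\Phi\!\left(\tfrac{f_n-f}{2}\right).
\]
Convexity together with $\Phi(-x)=\Phi(x)$ gives $\Phi((f_n-f)/2)\leq\tfrac12\Phi(f_n)+\tfrac12\Phi(f)$, so $G_n\geq 0$, while $f_n\to f$ a.e.\ and continuity of $\Phi$ give $G_n\to\Phi(f)$ a.e. Applying Fatou's lemma to $(G_n)$ and using the hypothesis $\rho_{\Phi}(f_n)\to\rho_{\Phi}(f)$ to evaluate $\lim_n\bigl(\tfrac12\rho_{\Phi}(f_n)+\tfrac12\rho_{\Phi}(f)\bigr)=\rho_{\Phi}(f)$, I obtain
\[
\rho_{\Phi}(f)\leq\rho_{\Phi}(f)-\limsup_n\rho_{\Phi}\!\left(\tfrac{f_n-f}{2}\right),
\]
which forces $\rho_{\Phi}\bigl((f_n-f)/2\bigr)\to 0$. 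This step uses only a.e.\ and modular convergence, not $\Delta_2$.

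It remains to upgrade this modular convergence to norm convergence. If $\Phi\in\Delta_2$, I would iterate the $\Delta_2$-inequality to bound $\rho_{\Phi}(\lambda(f_n-f))$ by a fixed multiple of $\rho_{\Phi}((f_n-f)/2)$ for each $\lambda>0$ (handling the threshold $x_0$ via $\sigma$-finiteness and finiteness of the tails), so that $\rho_{\Phi}(\lambda(f_n-f))\to 0$ for all $\lambda$, which is equivalent to $N_{\Phi}(f_n-f)\to 0$. In the continuous concave case, $\Phi(2t)\leq 2\Phi(t)$ gives $\rho_{\Phi}(f_n-f)\leq 2\rho_{\Phi}((f_n-f)/2)\to 0$, and there modular convergence is itself the relevant convergence. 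When the hypothesis is convergence in $\mu$-measure rather than a.e., I would run the argument along subsequences: every subsequence admits a further a.e.-convergent one on which $N_{\Phi}(f_n-f)\to 0$, and the standard subsequence principle promotes this to convergence of the full sequence. The main obstacle is precisely this passage from modular to norm convergence, i.e.\ the genuine use of $\Delta_2$: the Riesz inequality is clean and general, but without a growth restriction on $\Phi$ the dilations $\rho_{\Phi}(\lambda(f_n-f))$ cannot be controlled uniformly in $n$, so the care lies in exploiting $\Delta_2$ to tame them.
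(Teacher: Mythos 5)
The paper offers no proof of this theorem at all: it is quoted from Rao--Ren \cite{rao} as a known result, so there is no internal argument to compare yours against. Your reconstruction follows the standard textbook route: the elementary bound $\rho_{\Phi}(g)\leq N_{\Phi}(g)$ for $N_{\Phi}(g)\leq 1$ plus a convexity splitting for (a), and for (b) the Riesz--Scheff\'e device of applying Fatou's lemma to $G_n=\frac{1}{2}\Phi(f_n)+\frac{1}{2}\Phi(f)-\Phi\bigl(\frac{f_n-f}{2}\bigr)\geq 0$ to force $\rho_{\Phi}\bigl(\frac{f_n-f}{2}\bigr)\to 0$, followed by iteration of the $\Delta_2$-inequality to convert modular convergence into norm convergence. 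Part (b) is essentially correct as you present it, with the caveat (which you only half-acknowledge) that on an infinite non-atomic measure space the $\Delta_2$-condition must hold globally for the dilation argument to work; the term ``$\Delta_2$-regular'' in \cite{rao} is defined precisely to guarantee this, so your parenthetical about ``handling the threshold $x_0$'' should really be an appeal to that definition rather than to $\sigma$-finiteness.

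There is, however, a genuine gap in your forward implication of (a). The statement asserts it for an arbitrary Young function, but your upper bound $\limsup_n\rho_{\Phi}(f_n)\leq\rho_{\Phi}(f)$ needs $\rho_{\Phi}\bigl(f/(1-\lambda)\bigr)<\infty$ for some $\lambda>0$, and you justify the limit $\rho_{\Phi}\bigl(f/(1-\lambda)\bigr)\to\rho_{\Phi}(f)$ by invoking ``$\Delta_2$-regularity'' --- a hypothesis not available in that direction. This is not a removable blemish, because without $\Delta_2$ the asserted conclusion itself fails: take $\Phi(x)=e^{|x|}-1-|x|$, disjoint sets $A_k$ with $\mu(A_k)=e^{-k}k^{-2}$, $f=\sum_k k\chi_{A_k}$ and $f_n=(1+\frac{1}{n})f$; then $N_{\Phi}(f_n-f)=\frac{1}{n}N_{\Phi}(f)\to 0$ while $\rho_{\Phi}(f)<\infty$ and $\rho_{\Phi}(f_n)=\infty$ for every $n$. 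So either the claim must be read as $\rho_{\Phi}(f_n-f)\to 0$ (which your first inequality does prove unconditionally, and which is what the rest of the paper actually uses), or the forward direction of (a) also requires a $\Delta_2$-type hypothesis; as written, your argument proves a correct statement but not the one asserted, and you should say so explicitly rather than let the needed hypothesis slip in silently. A smaller soft spot: your deduction of convergence in $\mu$-measure from $\rho_{\Phi}(f_n-f)\to 0$ uses $\Phi(\varepsilon)>0$ for all $\varepsilon>0$, which holds for $N$-functions but not for every Young function.
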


Finally we recall the defintion of conditional expectation. Let $\mathcal{A}\subseteq\Sigma$ be a sub-$\sigma$-finite algebra. The
conditional expectation associated with $\mathcal{A}$ is
the mapping $f\rightarrow E^{\mathcal{A}}f$, defined for all
non-negative, measurable function $f$ as well as for all $f\in
L^1(\Sigma)$ and $f\in L^{\infty}(\Sigma)$, where
$E^{\mathcal{A}}f$, by the Radon-Nikodym theorem, is the unique
$\mathcal{A}$-measurable function satisfying
$$\int_{A}fd\mu=\int_{A}E^{\mathcal{A}}fd\mu, \ \ \ \forall A\in \mathcal{A} .$$
As an operator on $L^{1}({\Sigma})$ and $L^{\infty}(\Sigma)$,
$E^{\mathcal{A}}$ is idempotent and
$E^{\mathcal{A}}(L^{\infty}(\Sigma))=L^{\infty}(\mathcal{A})$ and
$E^{\mathcal{A}}(L^1(\Sigma))=L^1(\mathcal{A})$. Thus it can be
defined on all interpolation spaces of $L^1$ and $L^{\infty}$ such
as, Orlicz spaces \cite{besh}. If there is no possibility of
confusion, we write $E(f)$ in place of $E^{\mathcal{A}}(f)$. We list here some
of its useful properties:

\vspace*{0.2cm} \noindent $\bullet$ \  If $g$ is
$\mathcal{A}$-measurable, then $E(fg)=E(f)g$.

\noindent $\bullet$ \ $\varphi(E(f))\leq E(\varphi(f))$, where
$\varphi$ is a convex function.

\noindent $\bullet$ \ If $f\geq 0$, then $E(f)\geq 0$; if $f>0$,
then $E(f)>0$.

\noindent $\bullet$ \ For each $f\geq 0$, $S(f)\subseteq S(E(f))$.\\
For more information about conditional expectation one can see \cite{rao}.

For each $f\in L^{\Phi}(\Sigma)$ easily we get that 
$$\Phi(E(f))\leq
E(\Phi(f)),\ \ \ \ \ \ \ N_{\Phi}(E(f)\leq N_{\Phi}(f), \ \ \text{and} \ \ \ \|E(f)\|\leq \|f\|,$$
i.e, $E$ is a contraction on the Orlicz spaces.

\section{ \sc\bf Composition operators on Orlicz spaces}
Let $X$ be a Banach space and $\mathcal{B}(X)$ be the algebra of all  linear operators
on $X$. By an operator in $X$ we understand a
linear mapping $T:\mathcal{D}(T)\subseteq X\rightarrow
X$ defined on a linear subspace $\mathcal{D}(T)$ of
$X$ which is called the domain of $T$. The linear map $T$ is called densely defined if $\mathcal{D}(T)$ is dense in
$X$.\\
Let $(X,\Sigma,\mu)$ be a $\sigma$-finite measure space $f$ be a non-negative $\Sigma$-measurable function on $X$. Define the measure $\mu_f:\Sigma\rightarrow [0, \infty]$  by
$$\mu_f(E)=\int_{E}fd\mu, \ \ \ \ E\in \Sigma.$$
Throughout this paper, we denote by $(X,\Sigma,\mu)$, a measure space, that is, $X$ is a nonempty set, $\Sigma$ is a sigma algebra on $X$ and $\mu$ is a positive measure on $\Sigma$. Also, we assume that $\varphi:X\rightarrow X$ is a non-singular measurable transformation, that is, $\varphi^{-1}(F)\in \Sigma$, for every $F\in \Sigma$ and $\mu(\varphi^{-1}(F))=0$, if $\mu(F)=0$.
%$$\mu(\varphi^{-1}(F))\leq c\mu(F), \ \ \ \ \text{for every} F\in \Sigma,$$
Non-singularity of $\varphi$ guarantees that the linear operator
$$C_{\varphi}:\mathcal{D}(C_{\varphi})\subseteq L^{\Phi}(\mu)\rightarrow L^{\Phi}(\mu), \ \  \ \ \ \ C_{\varphi}(f)=f\circ\varphi,$$
is well-defined on the Orlicz space $L^{\Phi}(\mu)$ and is called composition operator. For more details on composition operators on Orlicz spaces one can refer to \cite{chkm}.

Let $\mu\circ\varphi^{-1}$ be absolutely continuous with respect to $\mu$, $h_i=h_{\varphi^i}=\frac{d\mu\circ \varphi^{-i}}{d\mu}$ be the Radon-Nikodym derivative of $d\mu\circ \varphi^{-i}$ for $i\in\mathbb{N}$, with respect to $d\mu$ and $h=h_1$. So by definition we have
$$\mu\circ\varphi^{-i}(A)=\int_A h_{i} d\mu=\mu_{h_i}(A), \ \ \ \ \ \ \forall A\in \Sigma.$$
Also,
$$\mathcal{D}(C_{\varphi})=\{f\in L^p(\mu):\int_X|f\circ \varphi|^pd\mu<\infty\}$$
$$\|C_{\varphi}(f)\|^p_p=\int_X|f\circ \varphi|^pd\mu=\int_X|f|^phd\mu=\int_X(|f|h^{\frac{1}{p}})^pd\mu=\|M_{h^{\frac{1}{p}}}(f)\|^p_p.$$
This means that $\mathcal{D}(C_{\varphi})=\mathcal{D}(M_{h^{\frac{1}{p}}})$.
Since
$$\int_X|f|^p(1+h)d\mu=\int_X|f|^pd\mu+\int_X|f\circ\varphi|^pd\mu=\|f\|^p_p+\|C_{\varphi}(f)\|^p_p,$$
then we have $\mathcal{D}(C_{\varphi})=L^p((1+h)d\mu)=\mathcal{D}(M_{h^{\frac{1}{p}}})$. So if we set $\Phi(x)=\frac{x^p}{p}$, for each $p\geq1$ and $x\geq0$, then by Theorem 3.1 of \cite{eb}, we get that $\overline{L^p(\nu)}^{\|.\|_p}=\overline{\mathcal{D}(M_{h^{\frac{1}{p}}})}^{\|.\|_p}=L^p(\mu)$, where $d\nu=(1+h)d\mu$, provided that $h<\infty$, a.e., $\mu$. Moreover, if we set $J=1+h$ and suppose that $M_{h^{\frac{1}{p}}}:\mathcal{D}(M_{h^{\frac{1}{p}}})\subseteq L^{p}(\mu)\rightarrow L^p(\mu)$, then by Theorem 3.4 of \cite{eb} we get that the following statements are equivalent:
\begin{itemize}
  \item $M_{h^{\frac{1}{p}}}$ is densely defined on $L^p(\mu)$.
  \item $J-1=h<\infty$, a.e., $\mu$.
  \item $\mu_{J-1}$ is $\sigma$-finite.
\end{itemize}
So by the above observations we get the following proposition.
\begin{prop}
Let $1\leq p<\infty$, $d\nu=(1+h)\mu$ and $C_{\varphi}:\mathcal{D}(C_{\varphi})\subseteq L^p(\mu)\rightarrow L^p(\mu)$ be the composition operator. Then the following statements are equivalent:
\begin{itemize}
  \item $C_{\varphi}$ is densely defined on $L^p(\mu)$.
  \item $h<\infty$, a.e., $\mu$.
  \item $\mu_{h}$ is $\sigma$-finite.
\end{itemize}
Especially, $$\overline{L^p(\nu)}^{\|.\|_p}=\overline{\mathcal{D}(C_{\varphi})}^{\|.\|_p}=L^p(\mu)$$
provided that $h<\infty$, a.e., $\mu$.
\end{prop}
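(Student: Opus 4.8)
The plan is to reduce the whole statement to the corresponding characterization for the multiplication operator $M_{h^{1/p}}$, which is already furnished by Theorem 3.4 of \cite{eb}. The decisive input has in fact been assembled in the discussion preceding the proposition: the identity $\|C_{\varphi}(f)\|_p = \|M_{h^{1/p}}(f)\|_p$, valid for every $f \in L^p(\mu)$, shows at once that $\mathcal{D}(C_{\varphi}) = \mathcal{D}(M_{h^{1/p}})$. This single equality is what drives the argument.

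First I would record that, since the two operators share exactly the same domain, $C_{\varphi}$ is densely defined if and only if $M_{h^{1/p}}$ is densely defined. Next I would invoke Theorem 3.4 of \cite{eb} with the choice $J = 1 + h$, so that $J - 1 = h$ and $\mu_{J-1} = \mu_h$. That theorem asserts the equivalence of the three conditions that $M_{h^{1/p}}$ be densely defined, that $h < \infty$ a.e.\ $\mu$, and that $\mu_h$ be $\sigma$-finite. Transporting the first of these across the domain identity yields precisely the three equivalent statements of the proposition.

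For the concluding assertion I would appeal to Theorem 3.1 of \cite{eb}, which under the hypothesis $h < \infty$ a.e.\ gives $\overline{\mathcal{D}(M_{h^{1/p}})}^{\|.\|_p} = L^p(\mu)$. Because $\mathcal{D}(C_{\varphi}) = \mathcal{D}(M_{h^{1/p}})$, and because the computation $\int_X |f|^p(1+h)\,d\mu = \|f\|_p^p + \|C_{\varphi}(f)\|_p^p$ identifies $L^p(\nu) = L^p\big((1+h)\,d\mu\big)$ with $\mathcal{D}(M_{h^{1/p}})$, the three spaces appearing in the displayed chain of closures coincide and all equal $L^p(\mu)$.

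I do not expect a serious obstacle here, as the heavy lifting sits inside the cited theorems. The one point requiring care is simply to confirm that those results of \cite{eb} apply verbatim in the present setting: the $\sigma$-finiteness of $\mu$ together with the absolute continuity $\mu \circ \varphi^{-1} \ll \mu$ guarantees that $h$ is a genuine measurable Radon-Nikodym derivative, so that $M_{h^{1/p}}$ is a legitimate (possibly unbounded) multiplication operator and the hypotheses of Theorems 3.1 and 3.4 of \cite{eb} are met. Granting this, the proposition is essentially a translation of the multiplication-operator theorem through the norm identity $\|C_{\varphi}(f)\|_p = \|M_{h^{1/p}}(f)\|_p$.
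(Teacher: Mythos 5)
Your proposal is correct and follows essentially the same route as the paper: the paper likewise derives the proposition from the norm identity $\|C_{\varphi}(f)\|_p=\|M_{h^{1/p}}(f)\|_p$, the resulting domain equality $\mathcal{D}(C_{\varphi})=L^p((1+h)d\mu)=\mathcal{D}(M_{h^{1/p}})$, and Theorems 3.1 and 3.4 of the cited reference with $J=1+h$. No substantive difference to report.
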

By the same way we can extend the above Proposition to the case $C_{\varphi}:\mathcal{D}(C_{\varphi})\subseteq L^p(\mu)\rightarrow L^q(\mu)$, where $1\leq p,q<\infty$. Indeed, for $1\leq p,q<\infty$, $C_{\varphi}:\mathcal{D}(C_{\varphi})\subseteq L^p(\mu)\rightarrow L^q(\mu)$ and $f\in \mathcal{D}(C_{\varphi})$ we have $\|C_{\varphi}(f)\|_q=\|M_{h^{\frac{1}{q}}}(f)\|_q$. And also by a straight forward calculations we get that
$M_{h^{\frac{1}{q}}}$ is densely defined from $L^p(\mu)$ into $L^q(\mu)$ if and only if  $h<\infty$, a.e., $\mu$, if and only if $\mu_{h}$ is $\sigma$-finite.

By the above observations the followings are equivalent:
\begin{itemize}
  \item $C_{\varphi}:\mathcal{D}(C_{\varphi})\subseteq L^p(\mu)\rightarrow L^q(\mu)$ is densely defined.
  \item $h<\infty$, a.e., $\mu$.
  \item $\mu_{h}$ is $\sigma$-finite.
\end{itemize}

Moreover, if $u:X\rightarrow \mathbb{C}$ is a measurable function, $\varphi:X\rightarrow X$ is a measurable transformation and the linear operator $uC_{\varphi}$ defined as follow is weighted composition operator.
$$uC_{\varphi}:\mathcal{D}(uC_{\varphi})\subseteq L^p(\mu)\rightarrow L^q(\mu), \ \ \ uC_{\varphi}(f)=u.f\circ\varphi, \ \ \ \forall f\in\mathcal{D}(uC_{\varphi})\subseteq L^p(\mu).$$
As we know for $f\in\mathcal{D}(uC_{\varphi})$ the followings hold:
\begin{align*}
\|uC_{\varphi}(f)\|^q_{q}&=\int_X|uC_{\varphi}(f)|^qd\mu\\
&=\int_XhE^{\varphi^{-1}(\Sigma)}(|u|^q)\circ\varphi^{-1}|f|^qd\mu\\
&=\int_XJ_h|f|^qd\mu\\
&=\|M_{J^{\frac{1}{q}}_{\varphi}}(f)\|^q_q,
\end{align*}
in which $J_{\varphi}=hE^{\varphi^{-1}(\Sigma)}(|u|^q)\circ\varphi^{-1}$.
Consequently we get that the following statements are equivalent:
\begin{itemize}
  \item $uC_{\varphi}:\mathcal{D}(uC_{\varphi})\subseteq L^p(\mu)\rightarrow L^q(\mu)$ is densely defined.
  \item $J_{\varphi}<\infty$, a.e., $\mu$.
  \item $\mu_{J_{\varphi}}$ is $\sigma$-finite.
\end{itemize}

Here we prove that the last two statements of the above equivalence are mutually equivalent that we will use it in Orlicz space setting.
\begin{lem}\label{l3.2}
Let $(X,\Sigma,\mu)$ be a $\sigma$-finite measure space and $\varphi:X\rightarrow X$ be a non-singular measurable transformation. Then $h<\infty$, a.e., $\mu$ if and only if the measure space $(X, \varphi^{-1}(\Sigma),\mu|_{\varphi^{-1}(\Sigma)})$ is a $\sigma$-finite measure space if and only if the measure space $(X, \varphi^{-1}(\Sigma),\mu_h)$ is a $\sigma$-finite measure space..
\end{lem}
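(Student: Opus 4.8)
The plan is to isolate the one structural fact that drives everything, namely the identity $\mu(\varphi^{-1}(F))=\int_F h\,d\mu$ for $F\in\Sigma$, i.e.\ $\mu\circ\varphi^{-1}=\mu_h$ on $\Sigma$, and then to relate $\sigma$-finiteness on the sub-$\sigma$-algebra $\varphi^{-1}(\Sigma)$ to $\sigma$-finiteness on $\Sigma$ by pushing forward along $\varphi$. First I would record the elementary push-forward principle: for any measure $m$ defined on $\varphi^{-1}(\Sigma)$, the space $(X,\varphi^{-1}(\Sigma),m)$ is $\sigma$-finite if and only if the measure $F\mapsto m(\varphi^{-1}(F))$ on $\Sigma$ is $\sigma$-finite. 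Both implications follow from $\varphi^{-1}\big(\bigcup_nF_n\big)=\bigcup_n\varphi^{-1}(F_n)$ and $\varphi^{-1}(X\setminus F)=X\setminus\varphi^{-1}(F)$; the only subtlety is that when a cover $\{\varphi^{-1}(F_n)\}$ of $X$ is pulled back to $\{F_n\}$, the remainder $X\setminus\bigcup_nF_n$ has push-forward measure $m(\varphi^{-1}(X\setminus\bigcup_nF_n))=m(\emptyset)=0$ and so may be thrown into the cover for free.

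With this in hand $(1)\Leftrightarrow(2)$ is short. Taking $m=\mu|_{\varphi^{-1}(\Sigma)}$, the push-forward is $F\mapsto\mu(\varphi^{-1}(F))=\mu_h(F)$, so $(2)$ says exactly that $(X,\Sigma,\mu_h)$ is $\sigma$-finite; and by the Proposition above this is equivalent to $h<\infty$ a.e., which is $(1)$. If I wanted the density step self-contained, I would get ``$h<\infty$ a.e.\ $\Rightarrow\mu_h$ $\sigma$-finite'' by refining a $\sigma$-finite exhaustion of $(X,\Sigma,\mu)$ with the level sets $\{h\le k\}$ (each piece has finite $\mu_h$-mass, and $\{h=\infty\}$ is $\mu$-null hence $\mu_h$-null), and the converse by noting that any set of finite $\mu_h$-mass meets $\{h=\infty\}$ in a $\mu$-null set, so a countable such family cannot cover $\{h=\infty\}$ when it has positive measure.

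The remaining equivalence, the one involving $\mu_h$, is where I expect the real difficulty, and it is the step I would scrutinise most. The safe reading is to connect it through $\mu_h\ll\mu$ and the density lemma of the previous paragraph, since $\sigma$-finiteness of $\mu_h$ is equivalent to $h<\infty$ a.e.; the point that must \emph{not} be glossed is that absolute continuity alone never transports $\sigma$-finiteness, so finiteness of the density is essential rather than automatic. If instead one insists on the literal reading of $(3)$ on the sub-$\sigma$-algebra $\varphi^{-1}(\Sigma)$, the push-forward principle converts it into $\sigma$-finiteness of $F\mapsto\mu_h(\varphi^{-1}(F))=\mu\circ\varphi^{-2}(F)=\mu_{h_2}(F)$, i.e.\ into the condition $h_2<\infty$ a.e. I would then have to decide whether $h_2<\infty$ a.e.\ really coincides with $h<\infty$ a.e.; this one-level shift from $\mu\circ\varphi^{-1}$ to $\mu\circ\varphi^{-2}$ is exactly the fragile link in the chain, and I would test it against simple atomic examples before trusting it, expecting that the intended and cleanest route keeps $(3)$ as $\sigma$-finiteness of $\mu_h$ on all of $\Sigma$.
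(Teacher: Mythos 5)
Your argument for the equivalence of $h<\infty$ a.e.\ with $\sigma$-finiteness of $\mu|_{\varphi^{-1}(\Sigma)}$ is correct and in substance the same as the paper's: the paper proves the forward direction by covering $X$ with the sets $\varphi^{-1}(B_{n,k})$, where $B_{n,k}=\{x\in A_n: h(x)\le k\}$, and the converse by a contradiction argument on $\{h=\infty\}$; your version factors both steps through the push-forward principle together with the statement that $\mu_g$ is $\sigma$-finite on $\Sigma$ iff $g<\infty$ a.e., which is the same computation packaged more modularly. One small dividend of your packaging: in the paper's converse direction the sets $B_n$ need not cover $X$ (only the sets $\varphi^{-1}(B_n)$ do), so the claim that $\mu(E)>0$ forces $\mu(E_{n,k})>0$ for some $n,k$ tacitly uses that $h=0$ a.e.\ off $\bigcup_n B_n$; your observation that the remainder has push-forward measure zero makes this explicit. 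Your suspicion about the third condition is also well placed: the paper's proof never addresses it (it stops after the first two equivalences), and under the literal reading on $\varphi^{-1}(\Sigma)$ the push-forward of $\mu_h$ is $\mu\circ\varphi^{-2}=\mu_{h_2}$, whose $\sigma$-finiteness amounts to $h_2<\infty$ a.e.\ --- and this is genuinely not equivalent to $h<\infty$ a.e. For instance, take atoms $c$, $b_n$, $a_{n,k}$ with $\mu(\{c\})=1$, $\mu(\{b_n\})=2^{-n}$, $\mu(\{a_{n,k}\})=2^{-k}$, and $\varphi(c)=c$, $\varphi(b_n)=c$, $\varphi(a_{n,k})=b_n$; then $h(c)=2$, $h(b_n)=2^n$, $h(a_{n,k})=0$, so $h<\infty$ everywhere, yet $h_2(c)=\sum_n\sum_k 2^{-k}+2=\infty$ on an atom of positive mass. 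So the reading consistent with Proposition 3.1, namely $\sigma$-finiteness of $\mu_h$ on all of $\Sigma$, is the one to adopt, and with that reading your proof covers the full statement where the paper's does not.
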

\begin{proof}
Let $h<\infty$, a.e., $\mu$. Since $(X,\Sigma,\mu)$ is a $\sigma$-finite measure space, then we have $X=\cup^{\infty}_{n=1}A_n$, for $A_n\in \Sigma$, with $0<\mu(A_n)<\infty$. For $n,k\in \mathbb{N}$, we set $B_{n,k}=\{x\in A_n: h(x)\leq k\}$. Then we have
$$\mu\circ \varphi^{-1}(B_{n,k})=\int_{B_{n,k}}hd\mu\leq k\mu(B_{n,k})\leq k\mu(A_n)<\infty.$$
On the other hands
$$X=\cup^{\infty}_{n=1}\left(\cup^{\infty}_{k=1}\varphi^{-1}(B_{n,k})\right)\cup\varphi^{-1}(E_0),$$
in which $E_0=\{x\in X: h(x)=\infty\}$. This implies that $(X, \varphi^{-1}(\Sigma),\mu|_{\varphi^{-1}(\Sigma)})$ is a $\sigma$-finite measure space.\\
Conversely, assume that $(X, \varphi^{-1}(\Sigma),\mu|_{\varphi^{-1}(\Sigma)})$ is a $\sigma$-finite measure space. So $X=\cup^{\infty}_{n=1}\varphi^{-1}(B_n)$, where $B_n\in \Sigma$ and $\mu\circ\varphi^{-1}(B_n)<\infty$. Without lose of generality we can assume that the sequence $\{B_n\}$ is increasing. Let $E=\{x\in X: h(x)=\infty\}$, $E_n=\{x\in X: h(x)>n\}$ and $E_{n,k}=\{x\in B_n: h(x)>k\}$, for $n,k\in\mathbb{N}$. It is clear that the sequence $\{E_n\}_{n\in \mathbb{N}}$ is decreasing and $E=\cap^{\infty}_{n=1}E_n$. If $\mu(E)>0$, then there exists $k, n\in \mathbb{N}$ such that $\mu(E_{n,k})>0$. Hence we have
$$\mu\circ\varphi^{-1}(B_n)=\int_{B_n}hd\mu\geq\int_{E_{n,k}}hd\mu\geq k\mu(E_{n,k})=k\mu(B_n\cap E_k).$$
Hence $\mu\circ\varphi^{-1}(B_n)\geq\lim_{k\rightarrow \infty}k\mu(E)=\infty$ and this is a contradiction. This completes the proof.
\end{proof}

Now we have the next Theorem.
\begin{thm}\label{t3.3}
Let $\Phi$ be a \textit{Young' functions} and $\varphi:X\rightarrow X$ be a non-singular measurable transformation. Then the composition operator  $C_{\varphi}:\mathcal{D}(C_{\varphi})\subseteq L^{\Phi}(\mu)\rightarrow L^{\Phi}(\mu)$ is densely defined provided that $h<\infty$, a.e., $\mu$. Moreover, if $d\nu=(1+h)d\mu$, then $\overline{L^{\Phi}(\nu)}^{N_{\Phi}}=\overline{\mathcal{D}(C_{\varphi})}^{N_{\Phi}}=L^{\Phi}(\mu)$.
\end{thm}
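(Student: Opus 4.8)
The plan is to identify $\mathcal{D}(C_{\varphi})$ with $L^{\Phi}(\nu)$ and then show the latter is $N_{\Phi}$-dense in $L^{\Phi}(\mu)$. I would begin from the change-of-variables identity: for $\Sigma$-measurable $f$ and $\lambda>0$,
$$\rho_{\Phi}(\lambda C_{\varphi}f)=\int_X\Phi(\lambda|f\circ\varphi|)\,d\mu=\int_X(\Phi(\lambda|f|))\circ\varphi\,d\mu=\int_X\Phi(\lambda|f|)\,h\,d\mu,$$
where the last equality uses $d\mu\circ\varphi^{-1}=h\,d\mu$ together with the pushforward formula $\int(g\circ\varphi)\,d\mu=\int g\,d(\mu\circ\varphi^{-1})$. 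Hence $f\circ\varphi\in L^{\Phi}(\mu)$ iff $\int_X\Phi(\lambda|f|)h\,d\mu<\infty$ for some $\lambda$; combining this with $f\in L^{\Phi}(\mu)$ and passing to a common $\lambda$ (legitimate since $\Phi$ is nondecreasing on $[0,\infty)$), one sees that $f\in\mathcal{D}(C_{\varphi})$ precisely when $\int_X\Phi(\lambda|f|)(1+h)\,d\mu<\infty$ for some $\lambda>0$, i.e. $\mathcal{D}(C_{\varphi})=L^{\Phi}(\nu)$ as sets. This yields the first equality $\overline{\mathcal{D}(C_{\varphi})}^{N_{\Phi}}=\overline{L^{\Phi}(\nu)}^{N_{\Phi}}$ for free, and reduces everything to proving this common closure equals $L^{\Phi}(\mu)$.

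For the density I would exhibit a dense family of truncations lying inside $L^{\Phi}(\nu)$. Since $h<\infty$ a.e., Lemma \ref{l3.2} yields $\sigma$-finiteness of $\mu|_{\varphi^{-1}(\Sigma)}$; concretely, writing $X=\bigcup_n A_n$ with $\mu(A_n)<\infty$ and setting $X_n=\{x\in A_1\cup\cdots\cup A_n:h(x)\le n\}$, one gets $X_n\nearrow X$ a.e. with $\mu(X_n)<\infty$ and $h\le n$ on $X_n$. For $f\in L^{\Phi}(\mu)$ put $f_n=f\chi_{X_n}$. Since $h\le n$ on $X_n$,
$$\int_X\Phi(\lambda f_n)(1+h)\,d\mu\le(1+n)\int_X\Phi(\lambda f)\,d\mu<\infty$$
for any $\lambda$ with $\int_X\Phi(\lambda f)\,d\mu<\infty$, so each $f_n$ lies in $L^{\Phi}(\nu)=\mathcal{D}(C_{\varphi})$.

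It then remains to prove $N_{\Phi}(f-f_n)\to0$, and this is where I expect the real difficulty. One has $f_n\to f$ a.e. with $|f_n|\le|f|$, and monotone convergence gives $\rho_{\Phi}(f_n)\to\rho_{\Phi}(f)$; the intended mechanism is to upgrade this modular convergence to norm convergence via Theorem \ref{t23}(b). The obstacle is that Theorem \ref{t23}(b) requires $\Phi$ to be $\Delta_2$-regular (or of the concave type stated there): for a general Young function the tail $f\chi_{X\setminus X_n}$ need not have vanishing Luxemburg norm, since a function lying outside the closure of the simple functions fails to have absolutely continuous norm. Thus the identification $\mathcal{D}(C_{\varphi})=L^{\Phi}(\nu)$ is routine, and the genuine crux is securing the $N_{\Phi}$-convergence of the truncations; I would therefore carry out the density step under the regularity hypothesis that Theorem \ref{t23}(b) supplies, and flag that this is precisely the point at which the structure of $\Phi$ must enter.
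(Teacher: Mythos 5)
Your argument is essentially the paper's own proof. The paper likewise truncates $f$ to the sets $C_N=\{x: h(x)<N-1\}$ on which $h$ is bounded, verifies $f_N=f\chi_{C_N}\in\mathcal{D}(C_{\varphi})$ by a change-of-variables modular estimate (obtaining $N_{\Phi}(C_{\varphi}f_N)\le (N-1)N_{\Phi}(f)$, the same computation as your bound $\int_X\Phi(\lambda f_n)(1+h)\,d\mu\le(1+n)\int_X\Phi(\lambda f)\,d\mu$), and then passes from $f_N\to f$ a.e.\ together with $\rho_{\Phi}(f_N)\to\rho_{\Phi}(f)$ to $N_{\Phi}(f_N-f)\to 0$ by citing Theorem~\ref{t23}. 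Your explicit identification $\mathcal{D}(C_{\varphi})=L^{\Phi}(\nu)$ is left implicit in the paper but is correct and makes the second assertion immediate.

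The reservation you raise at the final step is genuine, and it is not resolved in the paper either: Theorem~\ref{t23}(b) is stated only for $\Delta_2$-regular (or concave) $\Phi$, whereas Theorem~\ref{t3.3} assumes nothing about $\Phi$ beyond being a Young function, and the paper invokes Theorem~\ref{t23} at precisely the point you flag. Nor can one dodge it by using the tail estimate $\int_{B_N}\Phi(kf)\,d\mu<\epsilon$ already established in the paper's proof: that gives $\rho_{\Phi}\bigl(k(f-f_N)\bigr)\to 0$, which for a general Young function yields only $\limsup_N N_{\Phi}(f-f_N)\le 1/k$, not norm convergence. When $\Phi\notin\Delta_2$ and $h$ is unbounded, a function $f$ outside the order-continuous subspace of $L^{\Phi}(\mu)$ need not satisfy $N_{\Phi}(f\chi_{X\setminus C_N})\to 0$, so the truncation scheme itself can fail; closing the argument requires either adding $\Phi\in\Delta_2$ to the hypotheses or an approximation genuinely different from truncation. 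In short, your proposal reproduces the paper's proof, and the one step you decline to complete without further hypotheses on $\Phi$ is exactly the step the paper carries out without justification.
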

\begin{proof}
Let $h<\infty$ a.e., $\mu$. Then $\mu(\{x\in X: h(x)=\infty\})=0$ and so
$$S(h)=\cup^{\infty}_{n=1}\left(\{x\in X: n-1\leq h(x)<n\}=A_n\right)=X.$$
It is obvious that $A_n$'s are disjoint. If $f\in L^{\Phi}(\mu)$, then by definition
$$\sum^{\infty}_{n=1}\int_{A_n}\Phi(kf)d\mu=\int_{X}\Phi(kf)d\mu<\infty,$$
for some $k>0$. Hence for each $\epsilon>0$, there exists $N>0$ such that
\begin{equation}\label{e1}
\sum^{\infty}_{n=N}\int_{A_n}\Phi(kf)d\mu<\epsilon.
\end{equation}
Let $B_N=\cup^{\infty}_{n=N}A_n$ and $C_N=\cup^{N-1}_{n=1}A_n$. Then

$$\int_{B_N}\Phi(kf)d\mu=\sum^{\infty}_{n=N}\int_{A_n}\Phi(kf)d\mu<\epsilon$$
and $C_N=\{x\in X: h(x)<N-1\}$. It is clear that $f_N=f.\chi_{C_N}\in L^{\Phi}(\mu)$.

 Now we show that $f_N\in \mathcal{D}(C_{\varphi}(f_N))$.
\begin{align*}
\int_X\Phi(\frac{C_{\varphi}(f_N)}{(N-1)N_{\Phi}(f)})d\mu&\leq\frac{1}{N-1}\int_{C_N}\Phi(\frac{f_N}{N_{\Phi}(f)})hd\mu\\
&\leq\int_{C_N}\Phi(\frac{f_N}{N_{\Phi}(f)})d\mu\\
&\leq\int_{X}\Phi(\frac{f}{N_{\Phi}(f)})d\mu\\
&\leq 1.
\end{align*}
Hence $N_{\Phi}(C_{\varphi}(f_N))\leq (N-1)N_{\Phi}(f)<\infty$ and so $f_N\in \mathcal{D}(C_{\varphi})$. It is clear that $C_N\nearrow X$, and so $f_N\rightarrow f$, almost every where and $|f|\leq |f_n|$, for each $n\in \mathbb{N}$. Hence by Lebesgue dominated convergence theorem, we get that $\rho_{\Phi}(f_N)\rightarrow \rho_{\Phi}(f)$, when $N\rightarrow \infty$. By these observations and Theorem \ref{t23} we get that $N_{\Phi}(f_N-f)\rightarrow 0$ as $N\rightarrow \infty$. Therefore $C_{\varphi}$ is densely defined on $L^{\Phi}(\mu)$.\\
Moreover, for $N>1$, $\int_X\Phi(\frac{f_N}{N.N_{\Phi}(f)})d\nu\leq 1$. Hence $f_N\in L^{\Phi}(\nu)$ and consequently
$$\overline{L^{\Phi}(\nu)}^{N_{\Phi}}=\overline{\mathcal{D}(C_{\varphi})}^{N_{\Phi}}=L^{\Phi}(\mu).$$

\end{proof}
Here we recall the following fundamental lemma.
\begin{lem}\label{lem32}
If $(X,\mathcal{A},\mu)$ is a $\sigma$-finite measure space and $f$ is an $\mathcal{A}$-measurable function such that $f<\infty$ a.e. $\mu$, then there exists a sequence $\{B_n\}^{\infty}_{n=1}\subseteq \mathcal{A}$ such that $\mu(B_n)<\infty$ and $f<n$ a.e. $\mu$ on $B_n$ for every $n\in \mathbb{N}$ and $B_n\nearrow X$ as $n\rightarrow \infty$.
\end{lem}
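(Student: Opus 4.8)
The statement is a standard measure-theoretic fact, and the plan is to build the sets $B_n$ directly out of the $\sigma$-finiteness decomposition of $X$ intersected with the level sets of $f$. First I would invoke $\sigma$-finiteness to write $X=\bigcup_{m=1}^{\infty}X_m$ with $X_m\in\mathcal{A}$, $\mu(X_m)<\infty$, and (replacing $X_m$ by $\bigcup_{j\le m}X_j$) I may assume $X_m\nearrow X$. Separately, since $f<\infty$ a.e.\ $\mu$, the level sets $F_k=\{x\in X: f(x)<k\}$ satisfy $F_k\in\mathcal{A}$, $F_k\nearrow$, and $\mu\bigl(X\setminus\bigcup_{k}F_k\bigr)=\mu(\{f=\infty\})=0$.

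The key step is to splice these two increasing families into a single increasing sequence that simultaneously has finite measure and bounds $f$. The natural candidate is $B_n=X_n\cap F_n$. Then $B_n\in\mathcal{A}$, $\mu(B_n)\le\mu(X_n)<\infty$, and by construction $f<n$ on $B_n$, so the finite-measure and bounding requirements hold immediately. The only point needing a short argument is that $B_n\nearrow X$. I expect this to be the main (though mild) obstacle, since $B_n$ is an intersection of two increasing families and monotonicity of intersections is not entirely automatic: I would verify $B_n\subseteq B_{n+1}$ using $X_n\subseteq X_{n+1}$ and $F_n\subseteq F_{n+1}$, and then check $\bigcup_n B_n=X$ up to a null set. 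For the union, given $x$ with $f(x)<\infty$, there is $k$ with $x\in F_k$ and $m$ with $x\in X_m$; taking $n\ge\max\{k,m\}$ gives $x\in X_n\cap F_n=B_n$, so $\bigcup_n B_n\supseteq\{f<\infty\}$, whose complement is $\mu$-null.

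Finally, to upgrade "$B_n\nearrow$ up to a null set" to an honest increasing exhaustion, I would discard the null set $\{f=\infty\}$ once and for all: since the conclusion ``$f<n$ a.e.\ $\mu$ on $B_n$'' and ``$B_n\nearrow X$'' are both stated modulo $\mu$-null sets, no further adjustment is needed, and the sequence $\{B_n\}$ as defined already satisfies every clause of the lemma. This completes the plan; the argument is purely set-theoretic and uses only countable additivity and the finiteness of each $\mu(X_n)$, with no appeal to $f$ beyond its measurability and a.e.\ finiteness.
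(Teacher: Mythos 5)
Your construction is correct and is the standard one; note that the paper merely \emph{recalls} this lemma without giving any proof, so there is no argument of the authors' to compare against --- your write-up simply supplies the omitted routine verification. The one clause worth tightening is the last: with $B_n=X_n\cap F_n$ you get $\bigcup_n B_n=\{f<\infty\}$, which equals $X$ only up to a $\mu$-null set, while the lemma asserts $B_n\nearrow X$ literally; this is repaired by taking $B_n=X_n\cap\bigl(F_n\cup\{f=\infty\}\bigr)$ instead, which still has finite measure, still satisfies $f<n$ a.e.\ on $B_n$ (the exceptional set being contained in the null set $\{f=\infty\}$), and now honestly increases to all of $X$.
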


In the following Theorem we characterize densely defined Composition operators on Orlicz spaces.
\begin{thm} \label{t2}
Let $\Phi$ be a \textit{Young' functions}, $\varphi:X\rightarrow X$ be a non-singular measurable transformation and $C_{\varphi}:\mathcal{D}(C_{\varphi})\subseteq L^{\Phi}(\mu)\rightarrow L^{\Phi}(\mu)$ is the composition operator. If $d\nu=hd\mu$, then the following are equivalent:\\

(i) $C_{\varphi}$ is densely defined on $L^{\Phi}(\mu)$,\\

(ii) $h<\infty$ a.e., $\mu$.\\

(iii) $\mu|_{\varphi^{-1}(\Sigma)}$ is $\sigma$-finite.
\end{thm}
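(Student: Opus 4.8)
The plan is to prove the cycle of implications by leaning on the two results already in hand and supplying only the genuinely new direction. Note first that Theorem \ref{t3.3} gives $(ii)\Rightarrow(i)$ outright, while Lemma \ref{l3.2} supplies $(ii)\Leftrightarrow(iii)$ verbatim: its first clause is precisely the assertion that $h<\infty$ a.e.\ is equivalent to $\sigma$-finiteness of $\mu|_{\varphi^{-1}(\Sigma)}$. Hence the whole theorem collapses to a single remaining implication, and I would prove $(i)\Rightarrow(ii)$ in contrapositive form.

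Before that I would record the modular description of the domain. Since $\mu\circ\varphi^{-1}=\mu_h$, the change-of-variables identity
$\int_X\Phi\big(k(f\circ\varphi)\big)\,d\mu=\int_X\big(\Phi(kf)\big)\circ\varphi\,d\mu=\int_X\Phi(kf)\,h\,d\mu$
holds for every $k>0$ and every $f\in L^0(\Sigma)$. Consequently $f\in\mathcal{D}(C_{\varphi})$ precisely when $\int_X\Phi(kf)\,h\,d\mu<\infty$ for some $k>0$. This identity is the workhorse of the argument and I would establish it first.

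Now suppose $(ii)$ fails, so that $E_0=\{x\in X:h(x)=\infty\}$ satisfies $\mu(E_0)>0$. Given any $f\in\mathcal{D}(C_{\varphi})$, choose $k>0$ with $\int_X\Phi(kf)\,h\,d\mu<\infty$; restricting to $E_0$, where $h\equiv\infty$, forces $\Phi(kf)=0$ a.e.\ on $E_0$, and since $\Phi$ vanishes only at the origin this gives $f=0$ a.e.\ on $E_0$. Thus $\mathcal{D}(C_{\varphi})$ lies in the subspace $M=\{g\in L^{\Phi}(\mu):g=0\text{ a.e.\ on }E_0\}$. Using $\sigma$-finiteness, pick $E_0'\subseteq E_0$ with $0<\mu(E_0')<\infty$, so $\chi_{E_0'}\in L^{\Phi}(\mu)\setminus M$ and $M$ is proper. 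To see $M$ is closed I would argue that $N_{\Phi}(g_n-g)\to0$ yields convergence in measure, hence a.e.\ convergence along a subsequence, so that $g_n=0$ on $E_0'$ passes to $g=0$ on $E_0'$. This places $\overline{\mathcal{D}(C_{\varphi})}\subseteq M\subsetneq L^{\Phi}(\mu)$, contradicting density and proving $(i)\Rightarrow(ii)$.

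The step I expect to require the most care is the closedness of $M$, i.e.\ the passage from norm convergence to convergence in measure. I would obtain it cleanly from the inequality $\rho_{\Phi}(g_n-g)\le N_{\Phi}(g_n-g)$ (valid once the norm drops below $1$, by convexity of $\Phi$) followed by Chebyshev's inequality applied to the modular, namely $\mu(\{|g_n-g|>\lambda\})\le \rho_{\Phi}(g_n-g)/\Phi(\lambda)$. A second, more structural point worth flagging explicitly is the standing hypothesis that $\Phi$ vanishes only at the origin: without it the deduction ``$\Phi(kf)=0\Rightarrow f=0$'' fails, bounded functions can enter the domain, and the conclusion may break down; I would therefore make this assumption on $\Phi$ explicit in the statement.
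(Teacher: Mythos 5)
Your proposal is correct and follows essentially the same route as the paper: $(ii)\Rightarrow(i)$ is delegated to Theorem \ref{t3.3}, $(ii)\Leftrightarrow(iii)$ to Lemma \ref{l3.2}, and $(i)\Rightarrow(ii)$ is argued contrapositively by showing every $f\in\mathcal{D}(C_{\varphi})$ must vanish a.e.\ on $\{x: h(x)=\infty\}$, so that the domain lies in a proper closed subspace and cannot be dense. Your execution of that last step (the change-of-variables identity $\int_X\Phi(k(f\circ\varphi))\,d\mu=\int_X\Phi(kf)\,h\,d\mu$ plus closedness of $M$) is in fact tighter than the paper's rather loose approximation argument in $L^{\Phi}(\nu)$, and the caveat you flag --- that the step ``$\Phi(kf)=0$ on $E_0$ implies $f=0$ on $E_0$'' requires $\Phi$ to vanish only at the origin --- is a genuine hypothesis that the paper's own proof also tacitly uses without stating, so making it explicit is an improvement rather than a deviation.
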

\begin{proof}
 $(i)\rightarrow (ii)$. By definition of $d\nu=(1+h)d\mu$, for each $A\in\Sigma$, we have $\nu(A)=\mu(A)+\int_Ahd\mu$. Since $h$ is a non-negative measurable function, then $\nu(A)=0$ if and only if $\mu(A)=0$. Let $E=\{x\in X: h(x) =\infty\}=\cap^{\infty}_{n=1}E_n$, in which $E_n=\{x\in X: h(x)>n\}$. Then for each $f\in L^{\Phi}(\nu)$ we must have $f\chi_{E}=0$ a.e., $\mu$, because if $f\chi_{E}.\chi_A\neq0$ a.e., $\mu$, for some $A\in \Sigma$ with $0<\mu(A)<\infty$, then by using the fact we can approximate $f$ with a function $g\in \mathcal{D}(C_{\varphi})$, we will have $N_{\Phi}(f)=\infty$. Hence $f.\chi_{E}.(1+h)=0$ a., $\mu$. As a result we have $(1+h).\chi_{A\cap E}=0$ a.e., $\mu$ for all measurable set $A\in \Sigma$ with $\mu(A)<\infty$. Since $(X, \Sigma, \mu)$ is $\sigma$-finite measure space, then we get that $(1+h).\chi_{E}=0$ a.e. $\mu$. Moreover, we have $S(1+h)=X$, this implies that $\chi_{E}=0$ a.e., $\mu$ and so $\mu(E)=0$.\\
The implication $(ii)\rightarrow (i)$ holds by Theorem \ref{t3.3}. Moreover, by the Lemma \ref{l3.2} we get that $(ii)$ and $(iii)$ are equivalent.
\end{proof}
We recall the fact that for every Young's function $\Phi$ and positive numbers $a,b\in \mathbb{R}$,
$$\Phi^{-1}(a+b)\leq \Phi^{-1}(a)+\Phi^{-1}(b), \ \ \ \ \ \ \ \Phi(a)+\Phi(b)\leq \Phi(a+b).$$
Also, by convexity of $\Phi$ we have
 $$\frac{1}{2}(\Phi^{-1}(2a)+\Phi^{-1}(2b))\leq \Phi^{-1}(a+b), \ \ \ \ \ \ \ \ \Phi(a+b)\leq \frac{1}{2}(\Phi(2a)+\Phi(2b)).$$
In the next Lemma we get that the reverse of these inequalities are also valid provided that $\Phi\in \Delta_2$.
\begin{lem}\label{l3.6}
Let $\Phi$ be a Young's function and $\Phi\in \Delta_2$. Then there exists positive numbers $K, L$ such that for all $a,b\in \mathbb{R}^+$ with $a\geq x_0, \  b\geq x_0$ ($x_0$ comes from the definition of $\Delta_2$ condition),
 $$\Phi^{-1}(a)+\Phi^{-1}(b)\leq L\Phi^{-1}(a+b), \ \ \ \ \ \ \ \ \Phi(a+b)\leq K(\Phi(a)+\Phi(b)).$$
 Moreover, if $\Phi\in \Delta_2$ globally, then these inequalities hold for all $a,b\in \mathbb{R}^+$.
\end{lem}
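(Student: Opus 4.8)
The plan is to treat the two inequalities separately, since they require quite different amounts of work, and to extract the constants $K$ and $L$ explicitly. Throughout I will use only that a Young's function is non-decreasing on $[0,\infty)$, that its generalized inverse $\Phi^{-1}$ is therefore non-decreasing as well, and the defining estimate $\Phi(2x)\leq K\Phi(x)$ for $x\geq x_0$.

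For the second inequality $\Phi(a+b)\leq K(\Phi(a)+\Phi(b))$ I would argue by symmetry. Assuming without loss of generality that $a\geq b$ (both $\geq x_0$), I have $a+b\leq 2a$, so monotonicity gives $\Phi(a+b)\leq\Phi(2a)$. The $\Delta_2$-condition, applicable because $a\geq x_0$, yields $\Phi(2a)\leq K\Phi(a)$, and since $\Phi(a)\leq\Phi(a)+\Phi(b)$ I obtain the claim with exactly the $\Delta_2$-constant $K$. In the global case $x_0=0$ the same chain is valid for all $a,b\geq 0$ with no threshold restriction.

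For the first inequality I would observe that it is in fact a pure monotonicity statement and needs nothing from $\Delta_2$: since $\max(a,b)\leq a+b$, monotonicity of $\Phi^{-1}$ gives $\max(\Phi^{-1}(a),\Phi^{-1}(b))=\Phi^{-1}(\max(a,b))\leq\Phi^{-1}(a+b)$, whence $\Phi^{-1}(a)+\Phi^{-1}(b)\leq 2\max(\Phi^{-1}(a),\Phi^{-1}(b))\leq 2\Phi^{-1}(a+b)$, so $L=2$ works unconditionally. If instead one wants the constant produced from the same circle of ideas as the second inequality, one can combine $\Phi(\Phi^{-1}(a)+\Phi^{-1}(b))\leq K(a+b)$ — obtained by feeding $\Phi^{-1}(a),\Phi^{-1}(b)$ into the second inequality and using $\Phi(\Phi^{-1}(t))\leq t$ — with a scaling estimate $\Phi^{-1}(Kt)\leq L\Phi^{-1}(t)$, the latter following by choosing $n$ with $2^n\geq K$ and using the convexity bound $\Phi(2^nx)\geq 2^n\Phi(x)$ together with the inverse relations.

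The only genuine subtlety, and the step I expect to demand the most care, is the bookkeeping of the threshold $x_0$: the $\Delta_2$-inequality is available only for arguments at least $x_0$, so in the non-global case the second inequality is claimed precisely for $a,b\geq x_0$, and one must check that the reduction $a+b\leq 2a$ keeps the active argument $a$ above $x_0$ — which it does, since $a\geq b\geq x_0$. For the first inequality no such care is needed because the monotonicity argument is threshold-free; the restriction $a,b\geq x_0$ in the statement is merely inherited from the second inequality.
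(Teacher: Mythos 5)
Your proof is correct. There is nothing in the paper to compare it against: the authors dispose of Lemma \ref{l3.6} with the single line ``It is an easy exercise,'' so you are supplying the argument they omitted. Your two chains are exactly the standard ones and the threshold bookkeeping is handled properly: for the second inequality, with $a\geq b\geq x_0$ one has $\Phi(a+b)\leq\Phi(2a)\leq K\Phi(a)\leq K(\Phi(a)+\Phi(b))$, and the $\Delta_2$-estimate is applied only at the argument $a\geq x_0$, as required. Your treatment of the first inequality is in fact a small improvement on the statement: since $a\leq a+b$ and $b\leq a+b$, monotonicity of the generalized inverse gives $\Phi^{-1}(a)+\Phi^{-1}(b)\leq 2\Phi^{-1}(a+b)$ for \emph{all} $a,b\geq 0$ and \emph{any} Young's function, so $L=2$ works with no $\Delta_2$ hypothesis and no threshold --- the restriction $a,b\geq x_0$ in the lemma is genuinely needed only for the $\Phi$-inequality. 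The alternative route you sketch for the first inequality (via $\Phi(\Phi^{-1}(a)+\Phi^{-1}(b))\leq K(a+b)$ and a scaling bound on $\Phi^{-1}$) would need its own threshold check, since the $\Delta_2$-condition would there be applied at $\Phi^{-1}(a)$ rather than at $a$; but as you only offer it as a remark and your primary monotonicity argument is complete, this does not affect the proof.
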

\begin{proof}
It is an easy exercise.
\end{proof}
Let $\varphi, \psi$ be non-singular measurable transformations on $X$ that are absolutely continuous with respect to $\mu$ and $h_1=\frac{d\mu\circ\varphi^{-1}}{d\mu}$, $h_2=\frac{d\mu\circ\psi^{-1}}{d\mu}$. Then by taking $J=1+h_1+h_2$ we have $L^{\Phi}(Jd\mu)=\mathcal{D}(\alpha_1C_{\varphi}+\alpha_2C_{\psi})$, for any $\alpha_1, \alpha_2\in \mathbb{R}^+$. Indeed, if $f\in L^{\Phi}(Jd\mu)$, then there exists $k>0$ such that
$$\int_X\Phi(kf)d\mu+\int_X\Phi(kf\circ\varphi)d\mu+\int_X\Phi(kf\circ\psi)d\mu=\int_X\Phi(kf)Jd\mu<\infty.$$
Hence
\begin{align*}
\int_X\Phi(\frac{k}{2(\alpha_1+\alpha_2)}(\alpha_1C_{\varphi}+\alpha_2C_{\psi})(f))d\mu
 &\leq \frac{1}{2}\int_X\Phi(\frac{\alpha_1k}{\alpha_1+\alpha_2}(f\circ\varphi))d\mu
 +\frac{1}{2}\int_X\Phi(\frac{k\alpha_2}{\alpha_1+\alpha_2}(f\circ\psi))d\mu\\
 &\leq\int_X\Phi(kf\circ\varphi)d\mu
 +\int_X\Phi(kf\circ\psi)d\mu <\infty.
\end{align*}
This means that $(\alpha_1C_{\varphi}+\alpha_2C_{\psi})(f)\in L^{\Phi}(\mu)$ and so $f\in \mathcal{D}(\alpha_1C_{\varphi}+\alpha_2C_{\psi})$. Thus we have $L^{\Phi}(Jd\mu)\subseteq\mathcal{D}(\alpha_1C_{\varphi}+\alpha_2C_{\psi})$. For the converse let $f\in \mathcal{D}(\alpha_1C_{\varphi}+\alpha_2C_{\psi})$. Then
$(\alpha_1C_{\varphi}+\alpha_2C_{\psi})(f)\in L^{\Phi}(\mu)$. So there exists $k>0$ such that
$$\int_X\Phi(k(\alpha_1C_{\varphi}+\alpha_2C_{\psi})(f))d\mu<\infty.$$
Now by taking $\beta=(k\alpha_1, k\alpha_2)$ we have
 \begin{align*}
 \int_X\Phi(\beta f)(J-1)d\mu&\leq \int_X\Phi(k\alpha_1C_{\varphi}f)d\mu+\int_X\Phi(k\alpha_2C_{\psi}f)d\mu\\
 &\leq\int_X\Phi(k(\alpha_1C_{\varphi}+\alpha_2C_{\psi})(f))d\mu<\infty.
 \end{align*}
 This implies that $f\in L^{\Phi}((J-1)d\mu)$ and since $f\in L^{\Phi}(\mu)$, then we have $f\in L^{\Phi}(Jd\mu)$. Hence we have  $L^{\Phi}(Jd\mu)\supseteq\mathcal{D}(\alpha_1C_{\varphi}+\alpha_2C_{\psi})$.\\
 By the above assumptions we also have $\mathcal{D}(C_{\varphi}\circ C_{\psi})=L^{\Phi}(J_0d\mu)$, in which $J_0=1+h_2+h_1\circ \psi^{-1}$.
% provided that $J<\infty$, a.e., $\mu$.
\begin{lem}\label{l3.7}
Let $g$ be a non-negative measurable function on $X$ and $d\nu=gd\mu$. Then $L^{\Phi}(\nu)$ is dense in $L^{\Phi}(\mu)$ if and only if $g<\infty$, a.e., $\mu$.
\end{lem}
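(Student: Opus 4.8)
The plan is to prove the two implications separately, reusing machinery already in place: for the direction ``$g<\infty$ a.e. $\Rightarrow$ density'' I would imitate the truncation argument of Theorem \ref{t3.3}, and for the converse I would argue by contraposition exactly as in the proof of Theorem \ref{t2}.

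First I would assume $g<\infty$ a.e. $\mu$. Since $(X,\Sigma,\mu)$ is $\sigma$-finite, Lemma \ref{lem32} supplies a sequence $\{B_n\}\subseteq\Sigma$ with $\mu(B_n)<\infty$, $g<n$ a.e. on $B_n$, and $B_n\nearrow X$. Given $f\in L^{\Phi}(\mu)$ I set $f_n=f\chi_{B_n}$. Choosing $k>0$ with $\int_X\Phi(kf)\,d\mu<\infty$, the bound $g<n$ on $B_n$ yields
$$\int_X\Phi(kf_n)\,d\nu=\int_{B_n}\Phi(kf)\,g\,d\mu\leq n\int_X\Phi(kf)\,d\mu<\infty,$$
so that each $f_n\in L^{\Phi}(\nu)$; this is the one genuinely new point, and it is precisely the pointwise control $g<n$ (not mere finiteness) that makes it work. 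Since $B_n\nearrow X$ we have $f_n\to f$ a.e. $\mu$ with $|f_n|\leq|f|$. Exactly as in the proof of Theorem \ref{t3.3}, Lebesgue's dominated convergence theorem combined with Theorem \ref{t23} then gives $N_{\Phi}(f_n-f)\to 0$, so $f$ lies in the $N_{\Phi}$-closure of $L^{\Phi}(\nu)$ and density follows.

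For the converse I would prove the contrapositive. Suppose $\mu(E)>0$, where $E=\{x\in X:g(x)=\infty\}$; by $\sigma$-finiteness pick $A\subseteq E$ with $0<\mu(A)<\infty$, so that $\chi_A\in L^{\Phi}(\mu)$. If $u\in L^{\Phi}(\nu)$, choose $k>0$ with $\int_X\Phi(ku)\,g\,d\mu<\infty$; since $g=\infty$ on $A$, this forces $\Phi(ku)=0$ a.e. on $A$, hence $u=0$ a.e. on $A$ (as $\Phi$ vanishes only at the origin). Consequently $|\chi_A-u|\geq\chi_A$ pointwise, and monotonicity of the gauge norm gives $N_{\Phi}(\chi_A-u)\geq N_{\Phi}(\chi_A)>0$. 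Thus $\chi_A$ stays at a fixed positive $N_{\Phi}$-distance from $L^{\Phi}(\nu)$, so the latter is not dense in $L^{\Phi}(\mu)$.

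The step I expect to be the main obstacle is the passage from modular ($\rho_{\Phi}$) convergence to norm ($N_{\Phi}$) convergence in the first direction; this is exactly where Theorem \ref{t23} is indispensable, and it carries whatever regularity hypothesis on $\Phi$ is needed to convert almost-everywhere and modular convergence into norm convergence. The converse is comparatively routine once one notes that every element of $L^{\Phi}(\nu)$ must vanish on $\{g=\infty\}$ and that $N_{\Phi}$ is a lattice (monotone) norm.
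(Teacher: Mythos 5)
Your proof is correct and follows essentially the same route as the paper's: truncate $f$ to the sets where $g$ is bounded, combine a.e.\ and modular convergence with Theorem \ref{t23} to upgrade to $N_{\Phi}$-convergence, and for the converse observe that every element of $L^{\Phi}(\nu)$ must vanish a.e.\ on $\{x: g(x)=\infty\}$. Your converse is in fact spelled out more carefully than the paper's (which simply asserts $N_{\Phi}(h)=\infty$ for $h\in L^{\Phi}(\nu)$ rather than exhibiting a test function $\chi_A$ at fixed positive distance), and, like the paper, you implicitly rely on $\Phi\in\Delta_2$ (needed for Theorem \ref{t23}(b)) and on $\Phi$ vanishing only at the origin --- neither of which is stated as a hypothesis of the lemma.
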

\begin{proof}
Let $\Phi\in \Delta_2$ and $g<\infty$, a.e., $\mu$. Then $\mu(\{x\in X: g(x)=\infty\})=0$ and $S(g)=\cup^{\infty}_{n=1}E_n$, where $E_n=\{x\in X: g(x)< n\}$. Let $f\in L^{\Phi}(\mu)$ and $f_n=f.\chi_{E_n}$. It is easy to see that $f_n\in L^{\Phi}(\nu)$, $|f_n|\leq |f|$ and $f_n\rightarrow f$ a.e., $\mu$. Then by monotone convergence Theorem and Theorem \ref{t23} we get that $N_{\Phi}(f_n-f)\rightarrow 0$ as $n\rightarrow \infty$. This implies that $L^{\Phi}(\nu)$ is dense in $L^{\Phi}(\mu)$. Conversely, suppose that $L^{\Phi}(\nu)$ is dense in $L^{\Phi}(\mu)$ and $F=\{x\in X: g(x)=\infty\}=\cap^{\infty}_{n=1}F_n$, where $E_n=\{x\in X: ga(x)>n\}$. Assume that $\mu(F)\geq 0$ and $f\in L^{\Phi}(\mu)$. Then for every $\epsilon>0$ there exists $h\in L^{\Phi}(\nu)$ such that $N_{\Phi}(f-h)<\epsilon$. Since $\mu(F)>0$, then $N_{\Phi}(h)=\infty$. This is a contradiction. Hence we must have $\mu(F)=0$ and this completes the proof.
\end{proof}
%Let $n\in \mathbb{N}$. Then
%\begin{align*}
%\mathcal{D}(C^n_{\varphi})&=\{f\in L^{\Phi}(\mu): f\circ\varphi^{i}\in \mathcal{D}(C^{n-i}_{\varphi}), i=1,2,3,...,n-1\}\\
%&=\{f\in L^{\Phi}(\mu): N_{\Phi}(C^i_{\varphi}(f))<\infty, \forall i, 1\leq i\leq n\}\\
%&=\{f\in L^{\Phi}(\mu): \sum^n_{i=1}N_{\Phi}(C^i_{\varphi}(f))<\infty\}\\
%&=\{f\in L^{0}(\mu): \sum^n_{i=0}N_{\Phi}(C^i_{\varphi}(f))<\infty\}\\
%&=\cap^{n}_{n=1}\mathcal{D}(C^i_{\varphi}).
%\end{align*}

Here we recall that for Banach spaces $X,Y$ and densely defined linear operator $T:X\rightarrow Y$ there exists a unique maximal operator $T^{\ast}:\mathcal{D}(T^{\ast})\subset Y^{\ast}\rightarrow X^{\ast}$ such that
$$y^{\ast}(Tx)=\langle Tx, y^{\ast}\rangle=\langle x, T^{\ast}y^{\ast}\rangle=T^{\ast}y^{\ast}(x), \ \ \  x\in \mathcal{D}(T), \ \ \ y^{\ast}\in\mathcal{D}(T^{\ast}).$$
The linear operator $T^{\ast}$ is called the adjoint of $T$.\\
A linear operator $T:X\rightarrow Y$ is said to be closed if the graph of $T$, $\mathcal{G}(T)$ is closed in $X\times Y$, in which $\mathcal{G}(T)=\{(x,Tx):x\in \mathcal{D}(T)\}$. Also, $T$ is called closable, if there is a closed operator $\overline{T}$ with $\mathcal{G}(\overline{T})=\overline{\mathcal{G}(T)}$. Indeed, $\overline{T}$ is the smallest closed extension of $T$.
Here we recall the next theorem for later use.
\begin{thm}\cite{kato}\label{tk}
   Let $X,Y$ be reflexive Banach spaces. If $T:X\rightarrow Y$ is densely defined and closable, then $T^{\ast}$ is closed, densely defined and $T^{\ast\ast}=\overline{T}$.
   \end{thm}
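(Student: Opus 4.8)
Since this is the classical Kato duality theorem, the plan is to push everything down to the level of graphs and annihilators, where the reflexivity of $X$ and $Y$ does the essential work. The starting point would be the graph description of the adjoint: writing the duality pairing on $X\times Y$ as $\langle(x,y),(x^{\ast},y^{\ast})\rangle=\langle x,x^{\ast}\rangle+\langle y,y^{\ast}\rangle$, one reads off directly from the defining identity $\langle Tx,y^{\ast}\rangle=\langle x,T^{\ast}y^{\ast}\rangle$ that
$$\mathcal{G}(T^{\ast})=\{(y^{\ast},x^{\ast})\in Y^{\ast}\times X^{\ast}:(x^{\ast},-y^{\ast})\in \mathcal{G}(T)^{\perp}\},$$
so that $\mathcal{G}(T^{\ast})$ is the image of the annihilator $\mathcal{G}(T)^{\perp}\subseteq X^{\ast}\times Y^{\ast}$ under the ``flip'' isomorphism $(x^{\ast},y^{\ast})\mapsto(-y^{\ast},x^{\ast})$; equivalently $\mathcal{G}(T)^{\perp}=\{(-T^{\ast}y^{\ast},y^{\ast}):y^{\ast}\in\mathcal{D}(T^{\ast})\}$. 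Density of $\mathcal{D}(T)$ is exactly what forces $T^{\ast}$ to be single-valued, so this parametrization is legitimate.

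The closedness of $T^{\ast}$ is then immediate and, I would note, does not even use reflexivity: an annihilator is always norm-closed and the flip map is a homeomorphism, hence $\mathcal{G}(T^{\ast})$ is closed. For dense definedness I would argue by contradiction, and this is where reflexivity of $Y$ together with closability of $T$ enters. If $\mathcal{D}(T^{\ast})$ were not dense in $Y^{\ast}$, Hahn--Banach would produce a nonzero element of $Y^{\ast\ast}$ annihilating $\mathcal{D}(T^{\ast})$, and reflexivity identifies it with some $y\in Y$, $y\neq0$, with $\langle y,y^{\ast}\rangle=0$ for all $y^{\ast}\in\mathcal{D}(T^{\ast})$. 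Next I would invoke the bipolar identity in the reflexive space $X\times Y$, namely $\mathcal{G}(T)^{\perp\perp}=\overline{\mathcal{G}(T)}$, and unwind it through the parametrization above to obtain
$$(x,y)\in\overline{\mathcal{G}(T)}\iff\langle y,y^{\ast}\rangle=\langle x,T^{\ast}y^{\ast}\rangle\quad\text{for all }y^{\ast}\in\mathcal{D}(T^{\ast}).$$
Taking $x=0$ and the $y$ just produced shows $(0,y)\in\overline{\mathcal{G}(T)}$ with $y\neq0$, which contradicts closability of $T$ (closability says $\overline{\mathcal{G}(T)}$ is itself a graph, so it contains no $(0,y)$ with $y\neq0$). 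Hence $T^{\ast}$ is densely defined and $T^{\ast\ast}$ exists.

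Finally, for $T^{\ast\ast}=\overline{T}$ I would simply compare graphs. Applying the adjoint-definition to $T^{\ast}$ and again using reflexivity to identify $X^{\ast\ast}=X$ and $Y^{\ast\ast}=Y$ gives
$$(x,y)\in\mathcal{G}(T^{\ast\ast})\iff\langle x,T^{\ast}y^{\ast}\rangle=\langle y,y^{\ast}\rangle\quad\text{for all }y^{\ast}\in\mathcal{D}(T^{\ast}),$$
which is exactly the condition characterizing $\overline{\mathcal{G}(T)}$ found above. Therefore $\mathcal{G}(T^{\ast\ast})=\overline{\mathcal{G}(T)}$, i.e. $T^{\ast\ast}=\overline{T}$. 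I expect the main obstacle to be purely organizational: keeping the sign in the flip isomorphism consistent and applying the reflexive identifications $X^{\ast\ast}\cong X$, $Y^{\ast\ast}\cong Y$ and the bipolar identity $\mathcal{G}(T)^{\perp\perp}=\overline{\mathcal{G}(T)}$ precisely where reflexivity is genuinely used. The two substantive steps are the Hahn--Banach/closability contradiction establishing dense definedness and the final matching of the two graph conditions.
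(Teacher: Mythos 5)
Your proof is correct. The paper itself offers no proof of this statement---it is recalled verbatim as a citation to Kato's \emph{Perturbation Theory for Linear Operators}---and your graph/annihilator argument (closedness of $\mathcal{G}(T^{\ast})$ as a flipped annihilator, dense definedness via Hahn--Banach plus the bipolar identity $\mathcal{G}(T)^{\perp\perp}=\overline{\mathcal{G}(T)}$ and closability, then matching $\mathcal{G}(T^{\ast\ast})$ with $\overline{\mathcal{G}(T)}$) is precisely the standard proof given in that reference, with reflexivity invoked exactly where it is needed.
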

In the next proposition we get that every densely defined composition operator $C_{\varphi}:\mathcal{D}(C_{\varphi})\subseteq L^{\Phi}(\mu)\rightarrow L^{\Phi}(\mu)$ is closed.
\begin{prop}\label{p3.9}
Let $C_{\varphi}$ be densely defined on the Orlicz space $L^{\Phi}(\mu)$. Then $C_{\varphi}$ is a closed operator.
\end{prop}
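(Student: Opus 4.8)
The plan is to prove closedness directly from the definition, i.e.\ to show that the graph $\mathcal{G}(C_{\varphi})$ is closed in $L^{\Phi}(\mu)\times L^{\Phi}(\mu)$. So I would start from a sequence $\{f_n\}\subseteq\mathcal{D}(C_{\varphi})$ with $N_{\Phi}(f_n-f)\to 0$ and $N_{\Phi}(C_{\varphi}(f_n)-g)\to 0$ for some $f,g\in L^{\Phi}(\mu)$, and aim to conclude that $f\in\mathcal{D}(C_{\varphi})$ and $C_{\varphi}(f)=g$; concretely this amounts to showing $f\circ\varphi=g$ a.e.\ $\mu$.

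First I would record that Luxemburg-norm convergence forces convergence in $\mu$-measure. Writing $\alpha_n=N_{\Phi}(f_n-f)$ (and discarding the trivial indices where $\alpha_n=0$), one has $\int_X\Phi((f_n-f)/\alpha_n)\,d\mu\leq 1$, so for any $\lambda>0$ the monotonicity and evenness of $\Phi$ give the Chebyshev-type bound
$$\mu(\{|f_n-f|>\lambda\})\,\Phi\Big(\frac{\lambda}{\alpha_n}\Big)\leq\int_X\Phi\Big(\frac{f_n-f}{\alpha_n}\Big)d\mu\leq 1.$$
Since $\alpha_n\to 0$ forces $\lambda/\alpha_n\to\infty$ and $\Phi(x)\to\infty$, we get $\mu(\{|f_n-f|>\lambda\})\leq 1/\Phi(\lambda/\alpha_n)\to 0$, i.e.\ $f_n\to f$ in measure. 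By a Borel--Cantelli extraction I may then pass to a subsequence (not relabelled) with $f_n\to f$ a.e.\ $\mu$; applying the same argument to $C_{\varphi}(f_n)$ I pass to a further subsequence with $f_n\circ\varphi\to g$ a.e.\ $\mu$.

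The main step is to transport the a.e.\ convergence of $\{f_n\}$ through $\varphi$. Let $N=\{x:f_n(x)\not\to f(x)\}$, so $\mu(N)=0$. Non-singularity of $\varphi$ (equivalently $\mu\circ\varphi^{-1}\ll\mu$ with density $h$) yields $\mu(\varphi^{-1}(N))=\mu\circ\varphi^{-1}(N)=\int_N h\,d\mu=0$, where $h<\infty$ a.e.\ by the assumed dense definedness together with Theorem \ref{t2}. Hence $f_n\circ\varphi\to f\circ\varphi$ off the $\mu$-null set $\varphi^{-1}(N)$, that is, a.e.\ $\mu$. Comparing the two a.e.\ limits of the \emph{same} subsequence $\{f_n\circ\varphi\}$ and using uniqueness of a.e.\ limits gives $f\circ\varphi=g$ a.e.\ $\mu$. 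As $g\in L^{\Phi}(\mu)$, this shows $f\circ\varphi\in L^{\Phi}(\mu)$, so $f\in\mathcal{D}(C_{\varphi})$ and $C_{\varphi}(f)=g$; thus $(f,g)\in\mathcal{G}(C_{\varphi})$ and the graph is closed.

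I expect the only delicate point to be the measure-theoretic transfer in the third paragraph: guaranteeing that the $\mu$-null exceptional set for $\{f_n\}$ pulls back under $\varphi$ to a $\mu$-null set, so that the two a.e.\ limits can legitimately be compared. This is precisely where non-singularity and the finiteness of $h$ (supplied by dense definedness via Theorem \ref{t2}) enter. The convergence-in-measure step and the subsequence extraction are routine once the Chebyshev inequality for the Luxemburg norm is in hand.
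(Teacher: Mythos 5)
Your proposal is correct and follows essentially the same strategy as the paper: extract a subsequence of $\{f_n\}$ converging a.e., use non-singularity of $\varphi$ to push that a.e.\ convergence through to $\{f_n\circ\varphi\}$, and identify the two a.e.\ limits to get $g=f\circ\varphi$. The only difference is in how the a.e.-convergent subsequence is produced — you use a Chebyshev bound for the Luxemburg norm and convergence in measure, whereas the paper passes through $\rho_{\Phi}(f_n)\to\rho_{\Phi}(f)$ — and your version of that step is the more carefully justified of the two.
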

\begin{proof}
Let $\{f_n\}_{n\in \mathbb{N}}\subseteq \mathcal{D}(C_{\varphi})$ and $f,g\in L^{\Phi}(\mu)$ such that
$$f_n\rightarrow f\ \ \ \text{and}\ \ \ C_{\varphi}(f_n)\rightarrow g, \ \ \ \text{as} \ n\rightarrow \infty.$$
Since $N_{\Phi}(f_n-f)\rightarrow 0$, then $\rho_{\Phi}(f_n)\rightarrow \rho_{\Phi}(f)$. So there exists a subsequence $\{f_{n_k}\}_{k\in \mathbb{N}}$ such that $\Phi(f_{n_k})\rightarrow \Phi(f)$, a.e., $\mu$. Then we get that $f_{n_k}\rightarrow f$, a.e., $\mu$ and so $f_{n_k}\circ\varphi\rightarrow f\circ\varphi$, a.e., $\mu$. On the other hand we have $f_n\circ\varphi\rightarrow g$, so (without lose of generality) we get that $f_{n_k}\circ\varphi\rightarrow f\circ\varphi$, a.e., $\mu$. This implies that $g=f\circ\varphi$.
\end{proof}
Let $\Phi\in \Delta_2$ and $C_{\varphi}$ be densely defined on the Orlicz spaces $L^{\Phi}(\mu)$. Then $(L^{\Phi}(\mu))^*=L^{\Psi}(\mu)$, where $\Psi$ is the complementary Young's function to $\Phi$, by Riesz representation theorem we get that $C^*_{\varphi}=M_{h}C_{\varphi^{-1}}E^{\varphi}$, where $E^{\varphi}=E^{\varphi^{-1}(\Sigma)}$. In the next theorem we obtain the $\mathcal{D}(C^*_{\varphi})$ as the adjoint operator of $C_{\varphi}:L^{\Phi}(\mu)\rightarrow L^{\Phi}(\mu)$.
\begin{thm}
Let $\Phi\in \Delta_2$ and $C_{\varphi}:\mathcal{D}(C_{\varphi})\subseteq L^{\Phi}(\mu)\rightarrow L^{\Phi}(\mu)$ be densely defined. Then the adjoint operator $C^*_{\varphi}$ of $C_{\varphi}$ is as follow:
$$C^*_{\varphi}:\mathcal{D}\subseteq L^{\Psi}(\mu)\rightarrow L^{\Psi}(\mu), \ \ \ \ C^*_{\varphi}(f)=M_hC_{\varphi^{-1}}E^{\varphi}(f), \ \ \ f\in \mathcal{D}(C^*_{\varphi}).$$
Moreover, if $\Psi\in \Delta'$ and $\varphi$ is bijective, then
$$\mathcal{D}(C^*_{\varphi})\supseteq L^{\Psi}(X,\Sigma,\nu)\ \ \text{where} \ \ \ d\nu=(1+E^{\varphi}(h_{-1})\Psi(h)\circ\varphi)d\mu\ \text{and} \  h_{-1}=\frac{d\mu\circ\varphi}{d\mu}.$$
 And also $C^*_{\varphi}$ is densely defined if and only if $J=1+E^{\varphi}(h_{-1})\Psi(h)\circ\varphi<\infty$, a.e., $\mu$.
 \end{thm}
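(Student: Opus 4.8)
The plan is to establish the theorem in three stages corresponding to its three assertions. First I would pin down the adjoint formula $C^*_\varphi = M_h C_{\varphi^{-1}} E^\varphi$. Since $\Phi \in \Delta_2$, the dual of $L^\Phi(\mu)$ is $L^\Psi(\mu)$ with the natural pairing $\langle f, g \rangle = \int_X fg \, d\mu$, and $C_\varphi$ is densely defined and closed by Proposition \ref{p3.9}, so $C^*_\varphi$ exists as a well-defined maximal operator. To identify its action, I would start from the defining relation: for $f \in \mathcal{D}(C_\varphi)$ and $g \in \mathcal{D}(C^*_\varphi)$,
\begin{equation*}
\int_X (f\circ\varphi)\, g \, d\mu = \int_X f \cdot (C^*_\varphi g) \, d\mu.
\end{equation*}
The left side is rewritten using the change-of-variables / conditional-expectation identity that underlies the whole paper: $\int_X (f\circ\varphi) g \, d\mu = \int_X f \cdot h \cdot \big(E^{\varphi^{-1}(\Sigma)}(g)\big)\circ\varphi^{-1} \, d\mu$, where the $\varphi^{-1}$ makes the $\varphi^{-1}(\Sigma)$-measurable function $E^\varphi(g)$ descend to a $\Sigma$-measurable function on $X$. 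Matching integrands against the dense family of admissible $f$ gives $C^*_\varphi g = h \cdot (E^\varphi g)\circ\varphi^{-1} = M_h C_{\varphi^{-1}} E^\varphi (g)$, which is exactly the claimed formula.

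Second, for the domain inclusion I would compute $\rho_\Psi$ of the candidate image. Using $C^*_\varphi g = M_h C_{\varphi^{-1}} E^\varphi(g)$ and applying the contraction property of $E$ together with $\Psi \in \Delta'$ (so that $\Psi(h \cdot t) \le d\,\Psi(h)\Psi(t)$ for the relevant ranges), one estimates $\int_X \Psi\big(k\, C^*_\varphi g\big) d\mu$ from above. The change of variables $\varphi^{-1}$ on the factor coming from $C_{\varphi^{-1}}$ produces the Radon--Nikodym derivative $h_{-1} = d\mu\circ\varphi/d\mu$ (here bijectivity of $\varphi$ is used so that $C_{\varphi^{-1}}$ and the substitution are legitimate), and pulling the $\varphi$ back through $E^\varphi$ yields the weight $E^\varphi(h_{-1})\,\Psi(h)\circ\varphi$. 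The upshot is a bound of the form $\rho_\Psi(k\,C^*_\varphi g) \le \text{const}\cdot \int_X \Psi(k' g)\, d\nu$ with $d\nu = \big(1 + E^\varphi(h_{-1})\Psi(h)\circ\varphi\big)d\mu$; hence $g \in L^\Psi(\nu)$ forces $C^*_\varphi g \in L^\Psi(\mu)$, giving $\mathcal{D}(C^*_\varphi) \supseteq L^\Psi(\nu)$.

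Third, for the density equivalence, the forward direction follows from the inclusion just proved: if $J = 1 + E^\varphi(h_{-1})\Psi(h)\circ\varphi < \infty$ a.e., then by Lemma \ref{l3.7} the space $L^\Psi(\nu)$ is dense in $L^\Psi(\mu)$, and since $L^\Psi(\nu) \subseteq \mathcal{D}(C^*_\varphi)$ the domain is dense. For the converse I would argue contrapositively: if the set $\{J = \infty\}$ has positive measure, then any $g$ supported there with $g \ne 0$ cannot be approximated in $N_\Psi$-norm by elements of $\mathcal{D}(C^*_\varphi)$, exactly as in the proof of Theorem \ref{t2}, because the weight blows up and forces infinite norm. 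This mirrors the $h < \infty$ characterization obtained for $C_\varphi$ itself.

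I expect the main obstacle to be the second step: carefully justifying the weight $E^\varphi(h_{-1})\,\Psi(h)\circ\varphi$ through the nested change of variables. The interplay between $E^\varphi$ (a $\varphi^{-1}(\Sigma)$-conditional expectation), the substitution governed by $h_{-1}$, and the pullback by $\varphi$ must be tracked with care, and the $\Delta'$-condition on $\Psi$ has to be invoked precisely to linearize $\Psi(h\cdot\text{—})$ into a product; keeping the constants and the $x_0$-thresholds from Lemma \ref{l3.6} honest is where the real bookkeeping lies, whereas the first and third steps are comparatively routine once the formula and the weight are in hand.
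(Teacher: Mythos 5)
Your proposal follows essentially the same route as the paper's proof: the adjoint formula is obtained from the change-of-variables/conditional-expectation identity plus density of $\mathcal{D}(C_{\varphi})$, the domain inclusion comes from the same $\Delta'$-based estimate that produces the weight $E^{\varphi}(h_{-1})\Psi(h)\circ\varphi$ via the substitution governed by $h_{-1}$, and the density criterion is reduced to Lemma \ref{l3.7}. The only caveat (shared equally by the paper) is that the ``only if'' half of the final claim needs a reverse inclusion $\mathcal{D}(C^*_{\varphi})\subseteq L^{\Psi}(\nu)$, i.e.\ a matching lower bound on $\rho_{\Psi}(C^*_{\varphi}g)$, which neither your sketch nor the paper's argument actually supplies.
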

\begin{proof}
Since $\Phi\in \Delta_2$, then by Riesz representation theorem we have $(L^{\Phi}(\mu))^*=L^{\Psi}(\mu)$, where $\Psi$ is the complementary Young's function to $\Phi$. More precisely, for each $L\in (L^{\Phi}(\mu))^*$, there exists $g\in L^{\Psi}(\mu)$ such that $L(f)=\int_X f.gd\mu=\langle f,g\rangle$, for all $f\in L^{\Phi}(\mu)$. Hence for each $f\in \mathcal{D}(C_{\varphi})\subseteq L^{\Phi}(\mu)$ and $f\in \mathcal{D}((C_{\varphi})^*)\subseteq L^{\Psi}(\mu)$, we have
\begin{align*}
\langle f,C^*_{\varphi}(g)\rangle&=\langle C_{\varphi}f, g\rangle\\
&=\int_Xf\circ\varphi.gd\mu\\
&=\int_Xf.hE^{\varphi}(g)\circ\varphi^{-1}d\mu\\
&=\langle f, hE^{\varphi}(g)\circ\varphi^{-1}\rangle.
\end{align*}
Since $\mathcal{D}(C_{\varphi})$ is dense in $L^{\Phi}(\mu)$, the we have $C^*_{\varphi}(g)=hE^{\varphi}(g)\circ\varphi^{-1}=M_hC_{\varphi^{-1}}E^{\varphi}(g)$.
% As $E^{\varphi}$ is idempotent and so contraction, then we get that the domain of  $C^*_{\varphi}$
%$\mathcal{D}(C^*_{\varphi})=\mathcal{D}(M_hC_{\varphi^{-1}}E^{\varphi})$
%as a linear operator on $L^{\Psi}(\mu)$ is equal to the domain of the linear operator $M_hC_{\varphi^{-1}}:M_hC_{\varphi^{-1}}\subseteq L^{\Psi}(X,\varphi^{-1}(\Sigma), \mu)\rightarrow L^{\Psi}(X, \Sigma, \mu)$.\\
Let $g\in \mathcal{D}(C^*_{\varphi})$. Then by the assumption $\Psi\in \Delta'$
\begin{align*}
\int_X\Psi(hE^{\varphi}(g)\circ\varphi^{-1})d\mu&=\int_X\Psi(h\circ\varphi E^{\varphi}(g))\circ\varphi^{-1}d\mu\\
&=\int_X\Psi(h\circ\varphi E^{\varphi}(g))h_{-1}d\mu\\
&\leq\int_X\Psi(h\circ\varphi)\Psi(E^{\varphi}(g))h_{-1}d\mu\\
&=\int_X\Psi(E^{\varphi}(g))h_{-1}\Psi(h\circ\varphi)d\mu\\
&\leq\int_X\Psi(g)E^{\varphi}(h_{-1})\Psi(h\circ\varphi)d\mu.
\end{align*}
By the above observations we have $L^{\Psi}(X, \Sigma,\nu)\subseteq\mathcal{D}(C^{*}_{\varphi})\subseteq L^{\Psi}(\mu)$. Now the Lemma \ref{l3.7} implies that $C^*_{\varphi}$ is densely defined if and only if $J=1+E^{\varphi}(h_{-1})\Psi(h)\circ\varphi<\infty$, a.e., $\mu$.
\end{proof}
Finally in the next Theorem we characterize densely defined continuous composition operators.
\begin{thm}\label{t3.11}
If we consider the composition operator $C_{\varphi}$ on the Orlicz space $L^{\Phi}(\mu)$ such that $h<\infty$ a.e., $\mu$, and $\Psi\in\Delta'$ then   $C_{\varphi}:\mathcal{D}(C_{\varphi})\rightarrow L^{\Phi}(\mu)$ is continuous if and only if it is every where defined i.e., $\mathcal{D}(C_{\varphi})=L^{\Phi}(\mu)$.
\end{thm}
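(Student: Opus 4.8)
The plan is to recognize the statement as a standard fact about closed operators, once the two structural results already proved are in hand. Under the standing hypothesis $h<\infty$ a.e.\ $\mu$, Theorem \ref{t3.3} guarantees that $C_{\varphi}$ is densely defined, and Proposition \ref{p3.9} guarantees that a densely defined $C_{\varphi}$ is closed. Hence $C_{\varphi}:\mathcal{D}(C_{\varphi})\subseteq L^{\Phi}(\mu)\rightarrow L^{\Phi}(\mu)$ is a closed, densely defined operator on a Banach space, and the equivalence should come purely from the interplay between closedness and a dense domain; I would not expect to need any estimate specific to $\varphi$ at this stage.

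For the direction ``everywhere defined $\Rightarrow$ continuous'' I would invoke the closed graph theorem directly: $L^{\Phi}(\mu)$ is a Banach space, $C_{\varphi}$ is closed by Proposition \ref{p3.9}, and if in addition $\mathcal{D}(C_{\varphi})=L^{\Phi}(\mu)$, then $C_{\varphi}$ is a closed operator defined on all of the space, so it is bounded, i.e.\ continuous.

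For the converse, ``continuous $\Rightarrow$ everywhere defined'', I would show that a closed operator which is bounded on its domain has closed domain, and then conclude by density. Fix $f\in L^{\Phi}(\mu)$; by Theorem \ref{t3.3} the domain is dense, so choose $f_n\in\mathcal{D}(C_{\varphi})$ with $N_{\Phi}(f_n-f)\to 0$. Continuity supplies a constant $M$ with $N_{\Phi}(C_{\varphi}f_n-C_{\varphi}f_m)=N_{\Phi}(C_{\varphi}(f_n-f_m))\leq M\,N_{\Phi}(f_n-f_m)$, so $\{C_{\varphi}f_n\}$ is Cauchy and, by completeness of $L^{\Phi}(\mu)$, converges to some $g$. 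Closedness of $C_{\varphi}$ then forces $f\in\mathcal{D}(C_{\varphi})$ with $C_{\varphi}f=g$; since $f$ was arbitrary, $\mathcal{D}(C_{\varphi})=L^{\Phi}(\mu)$.

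Because the argument is of this \emph{soft} functional-analytic type, the genuine work has already been absorbed into the prerequisites, so the main obstacle is simply to confirm that they apply here: dense definedness from $h<\infty$ via Theorem \ref{t3.3}, and closedness via Proposition \ref{p3.9} (which in turn rests on the norm-versus-modular convergence recorded in Theorem \ref{t23}). The auxiliary hypothesis $\Psi\in\Delta'$ does not seem to enter the closed-graph reasoning itself; I expect it is retained to keep the duality $(L^{\Phi}(\mu))^{*}=L^{\Psi}(\mu)$ and the associated regularity of the space available, consistently with the adjoint computation preceding this theorem. If instead one wished to avoid the closed graph theorem, the same two directions could be routed through Theorem \ref{tk}, but the closed-graph argument is the most economical.
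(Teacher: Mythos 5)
Your proposal is correct and follows essentially the same route as the paper: dense definedness from $h<\infty$ via Theorem \ref{t3.3}, closedness via Proposition \ref{p3.9}, the closed graph theorem for one direction, and closedness of the domain of a closed bounded operator plus density for the other. You in fact supply more detail than the paper (the Cauchy-sequence argument showing the domain is closed), and your observation that $\Psi\in\Delta'$ plays no role in the argument is accurate.
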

\begin{proof} Since  $h<\infty$ a.e., $\mu$, then the composition operator $C_{\varphi}$ is densely defined and by the Proposition \ref{p3.9} it is closed. Now, if $C_{\varphi}$ is continuous, then we get that $\mathcal{D}(C_{\varphi})$ is closed and so $\mathcal{D}(C_{\varphi})=L^{\Phi}(\Sigma)$.

Conversely, let $C_{\varphi}$ be every where defined. Since the composition operator $C_{\varphi}$ is closed, then by closed graph theorem we obtain that $C_{\varphi}$ is continuous.
\end{proof}

\smallskip\noindent
\textbf{Acknowledgement.} My manuscript has no associate data.

\end{document}